\newcommand{\bbN}{{\mathbb{N}}}
\newcommand{\bbR}{{\mathbb{R}}}
\newcommand{\bbC}{{\mathbb{C}}}
\newcommand{\ac}{\textrm{ac}}
\newcommand{\sing}{\textrm{sing}}
\newcommand{\no}{\nonumber}
\newcommand{\wti}{\widetilde}
\newcommand{\supp}{\text{\rm{supp}}}
\newcommand{\beq}{\begin{equation}}
\newcommand{\eeq}{\end{equation}}
\newcommand{\ba}{\begin{align}}
\newcommand{\ea}{\end{align}}
\DeclareMathOperator{\Ima}{Im}
\numberwithin{equation}{section}
\newtheorem{theorem}{Theorem}[section]
\newtheorem{lemma}[theorem]{Lemma}
\theoremstyle{definition}
\newtheorem{definition}[theorem]{Definition}
\theoremstyle{remark}
\newtheorem{remark}{Remark}[section]
\date{}
\begin{document}
\title{Stability of Asymptotics of Christoffel-Darboux Kernels}
\author{Jonathan Breuer$^{1,3}$, Yoram Last$^{1,3}$ and Barry Simon$^{2,3}$}

\thanks{$^1$ Institute of Mathematics, The Hebrew University, 91904 Jerusalem, Israel.
E-mail: jbreuer@math.huji.ac.il; ylast@math.huji.ac.il. Supported in part by The Israel Science
Foundation (Grant No.\ 1105/10)}

\thanks{$^2$ Mathematics 253-37, California Institute of Technology, Pasadena, CA 91125, USA.
E-mail: bsimon@caltech.edu. Supported in part by NSF Grant No.\ DMS-0968856}

\thanks{$^3$ Research supported in part
by Grant No.\ 2010348 from the United States-Israel Binational Science
Foundation (BSF), Jerusalem, Israel}

\maketitle
\sloppy
\begin{abstract}
We study the stability of convergence of the Christoffel-Darboux kernel, associated with a compactly supported measure, to the sine kernel, under perturbations of the Jacobi coefficients of the measure. We prove stability under variations of the boundary conditions and stability in a weak sense under $\ell^1$ and random $\ell^2$ diagonal perturbations. We also show that convergence to the sine kernel at $x$ implies that $\mu(\{x\})=0$.
\end{abstract}
\section{Introduction}

Let $d\mu(x)=w(x)dx+d\mu_{\sing}(x)$ be a compactly supported probability measure with an infinite number of points in its support, where $\mu_\sing$ denotes the part of $\mu$ which is singular with respect to Lebesgue measure. Let $\{p_n\}_{n=0}^\infty$ be the normalized orthogonal polynomials for $d\mu$ and let $\{a_n,b_n\}_{n=1}^\infty$ be the Jacobi parameters defined by 
\beq \label{recurrence}
\begin{split}
xp_n(x) &=a_{n+1}p_{n+1}(x)+b_{n+1}p_n(x)+a_np_{n-1}(x), \quad n\geq 1 \\
xp_0(x)&=a_{1}p_{1}(x)+b_1p_0(x),
\end{split}
\eeq
and satisfying $a_n>0$, $b_n \in \bbR$, $\sup_n \left(a_n+|b_n|\right)<\infty$ (note $p_0(x) \equiv 1$ by the normalization). 

The $n$th Christoffel-Darboux (CD) kernel associated with $\mu$, $K_n(\mu;x,y)$, is the kernel of the projection from $L^2(d\mu)$ to the subspace spanned by $\{1,x,x^2,\ldots, x^{n-1} \}$. Namely,
\beq \label{CD}
K_n(\mu;x,y)=\sum_{j=0}^{n-1} p_j(x)p_j(y).
\eeq
The asymptotics of $K_n(x,y)$ for $x-y\sim \frac{1}{n}$ as $n \rightarrow \infty$ has been a topic of intensive study recently, motivated in part by the connection to the asymptotic behavior of zeros of $p_n$ (see \cite{als,last-simon-CPAM, LeLu1, totik}), and to the problem of universality in random matrix theory (see e.g., \cite{deift, Kuijlaars, lubinskyRev}). In particular, the limit
\beq \label{eq:quasiuniversality}
\lim_{n \rightarrow \infty}\frac{K_n\left(x_0+\frac{a}{n},x_0+\frac{b}{n} \right)}{n}=\frac{\sin\left(\pi \rho(x_0) (b-a) \right)}{\pi w(x_0) (b-a)},
\eeq
has been shown to hold for large classes of measures $\mu$, whenever $x_0$ is a Lebesgue point of $\mu$  (\cite{als, findley, freud, LeLu, lubinsky1, lubinskyUniv2, Simon-ext, totik} is a very partial list of relevant references). In \eqref{eq:quasiuniversality}, $\rho(x_0)$ is some positive number. In all known examples, $\rho$ is the density (i.e.\ the  derivative with respect to Lebesgue measure) of the weak limit of the sequence $\frac{K_n(x,x)}{n}d\mu(x)$. We will want to avoid such a restriction below.

As is well known \cite{simon-szego, szego}, there is a one to one correspondence (through \eqref{recurrence}) between compactly supported probability measures with infinite support and bounded real sequences $\{a_n,b_n\}_{n=1}^\infty$ satisfying $a_n>0$ for all $n$. Given a measure, $\mu$, with Jacobi parameters $\{a_n,b_n\}_{n=1}^\infty$, and a perturbing sequence $\{\beta_n\}_{n=1}^\infty$, it is natural to ask what properties of $\mu$ carry over to the measure $\mu_\beta$ associated with the Jacobi parameters  $\{a_n,b_n+\beta_n\}_{n=1}^\infty$. 

The purpose of this paper is to study the stability of \eqref{eq:quasiuniversality} under such perturbations. We shall focus on points where $\mu$ has some regularity. More precisely,

\begin{definition} \label{def:strong-Lebesgue}
We say $x_0$ is a strong Lebesgue point for $\mu$ if the following conditions hold:\\
$(i)$ Letting $F_\mu(z)=\int\frac{d\mu(t)}{t-z}$ be the Stieltjes transform of $\mu$, 
\beq \label{strong-Lebesgue0}
F_\mu(x_0+i0)=\lim_{\varepsilon \rightarrow 0+} F_\mu(x_0+i\varepsilon)
\eeq exists and is finite. \\
$(ii)$ The derivative of $\mu$ with respect to Lebesgue measure exists and is positive at $x_0$, namely 
\beq \label{strong-Lebesgue0.5}
\lim_{\varepsilon \rightarrow 0}\frac{\mu(x_0-\varepsilon,x_0+\varepsilon)}{2\varepsilon}=w(x_0)>0.
\eeq
Moreover, $x_0$ is a Lebesgue point of $w$: 
\beq \label{strong-Lebesgue1}
\lim_{\varepsilon \rightarrow 0^+}\int_{x_0-\varepsilon}^{x_0+\varepsilon}\frac{ \left|w(t)-w(x_0)\right|}{2 \varepsilon}dt=0.
\eeq  
\end{definition} 
\begin{remark}
Note that \eqref{strong-Lebesgue0.5} and \eqref{strong-Lebesgue1} imply immediately that the part of $\mu$ that is singular with respect to Lebesgue measure satisfies 
\beq \label{strong-Lebesgue2}
\lim_{\varepsilon \rightarrow 0} \frac{\mu_\sing \left(x_0-\varepsilon,x_0+\varepsilon \right)}{2 \varepsilon}=0.
\eeq 
Maximal function methods \cite{rudin} show that almost every $x_0$ w.r.t.\ $\mu_{\textrm{ac}}$(=the part of $\mu$ that is absolutely continuous with respect to Lebesgue measure) satisfies \eqref{strong-Lebesgue0.5} and \eqref{strong-Lebesgue1}. Similar methods also show that Lebesgue almost every $x_0$ satisfies \eqref{strong-Lebesgue0} (see \cite[Theorem I.4]{simon-rankone}). Thus, almost every point with respect to $\mu_{\textrm{ac}}$ is a strong Lebesgue point of $\mu$. 
\end{remark}

\begin{definition} \label{def:quasiuniversality}
We say that \emph{quasi bulk universality} (or simply \emph{quasi universality}) holds for $\mu$ at $x_0 \in \bbR$ if $x_0$ is a strong Lebesgue point of $\mu$ and  \eqref{eq:quasiuniversality} holds uniformly for $a,b$ in compact subsets of $\bbC$, for some positive number $\rho(x_0)$.
\end{definition}
\begin{remark} \label{rem:weakuniversalityequiv}
Using the uniform convergence on compacts and the continuity of the function $\frac{\sin(x-y)}{x-y}$, it is not hard to see that Definition \ref{def:quasiuniversality} is equivalent to the following two conditions (given that $x_0$ is a strong Lebesgue point): \\
i) Uniformly for $a,b \in$ compact subsets of $\bbC$
\beq \label{eq:weakuniversality1.1}
\lim_{n \rightarrow \infty}\frac{K_n\left(x_0+\frac{a}{w(x_0)K_n(x_0,x_0)},x+\frac{b}{w(x_0)K_n(x_0,x_0)} \right)}{K_n(x_0,x_0)}=\frac{\sin\left(\pi  (b-a) \right)}{\pi (b-a)},
\eeq  
(known as \emph{weak bulk universality}). \\
ii) $\lim_{n \rightarrow \infty} \frac{K_n(x_0,x_0)w(x_0)}{n}=\rho(x_0)$.
\end{remark}
\begin{remark}
In the case that $\frac{K_n(x,x)}{n}d\mu(x)$ has a weak limit, $\nu$ (aka `the density of states' or density of zeros of $p_n$ \cite{simon-DMJ}), and if $\rho(x_0)$ is the density of $\nu$ at the point $x_0$, quasi bulk universality implies bulk universality \cite{als}. 
\end{remark}

In a sense, the simplest nontrivial perturbing sequence, $\{\beta_n\}_{n=1}^\infty$, is a sequence satisfying $\beta_n=0$ for all $n \neq 1$. In order to treat this case, we first consider the problem (which is interesting in its own right) of the consequences of universality for the second kind CD kernel.

For this, recall that the \emph{second kind} orthogonal polynomials associated with $\mu$, $\{q_n\}_{n=0}^\infty$, are defined by  
\beq \label{second-kind} 
q_n(x)=\int\frac{p_{n}(x)-p_{n}(t)}{x-t}d\mu(t).
\eeq
Note that $q_n$ is a polynomial of degree $(n-1)$ for $n \geq 1$, and $q_0=0$. Moreover (see Section 2 below), $a_1 q_n$ are the orthonormal polynomials with respect to the measure $\mu'$ whose Jacobi coefficients are $\{a_{n+1},b_{n+1}\}_{n=1}^\infty$. We let 
\beq \no
\wti{K}_n(x,y)=\sum_{j=0}^{n-1} q_j(x) q_j(y)
\eeq 
be the \emph{second kind} CD kernel and we let $\wti{\mu}$ be the orthogonality measure of the second kind orthogonal polynomials (so $\wti{\mu}=a_1^2 \mu'$). We can now state our first main result, which we shall prove in Section 3.

\begin{theorem}\label{thm:secondkind}
Assume that $\mu$ has compact support and $x_0$ is a strong Lebesgue point of $\mu$. Assume further that 
\beq \label{universality2}
\lim_{n \rightarrow \infty}\frac{K_n\left(x_0+\frac{a}{n},x_0+\frac{b}{n} \right)}{n}=\frac{\sin\left(\pi \rho(x_0)(b-a) \right)}{\pi w(x_0)(b-a)}
\eeq
uniformly for $a,b$ in compact subsets of $\bbC$, for some positive number $\rho(x_0)$.

Then 
\beq \label{universality-perturbed00}
\lim_{n \rightarrow \infty}\frac{\wti{K}_n\left(x_0+\frac{a}{n},x_0+\frac{b}{n} \right)}{n}=\frac{\sin\left(\pi \rho(x_0)(b-a) \right)}{\pi \wti{w}(x_0)(b-a)}
\eeq
uniformly for $a,b$ in compact subsets of $\bbC$, where $\wti{w}(x_0) \neq 0$ is the weight of $d\wti{\mu}$ at $x_0$. 
\end{theorem}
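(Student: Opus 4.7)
My plan is to reduce the asymptotics of $\widetilde{K}_n$ to those of $K_n$ via an explicit integral transform, then pass to the limit using universality and the strong Lebesgue conditions. The starting point is $q_j(x) = \int [p_j(x) - p_j(t)]/(x-t)\, d\mu(t)$, which upon substitution into $\widetilde{K}_n(x,y) = \sum_{j=0}^{n-1} q_j(x) q_j(y)$ yields
\[
\widetilde{K}_n(x,y) = \iint \frac{K_n(x,y) - K_n(x,t) - K_n(s,y) + K_n(s,t)}{(x-s)(y-t)}\, d\mu(s)\, d\mu(t).
\]
The apparent poles at $s=x$ and $t=y$ cancel because the numerator vanishes on those lines; in fact the integrand is a polynomial in $(s,t)$, so the identity holds without principal-value subtleties.

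With $x = x_0 + a/n$ and $y = x_0 + b/n$, I would split the $(s,t)$-integration into a bulk near $(x_0, x_0)$ at scale $1/n$ and a complementary tail. In the bulk, rescaling $s = x_0 + u/n$, $t = x_0 + v/n$ and using $d\mu \approx w(x_0)\,ds$ (from the strong Lebesgue condition) together with the universality hypothesis converts each $K_n(\cdot,\cdot)/n$ into the sine kernel $S(p,q) = \sin(\pi\rho(x_0)(q-p))/[\pi w(x_0)(q-p)]$, and the rescaled integrand becomes a specific combination of four sine kernels divided by $(a-u)(b-v)$. The tail is controlled by rewriting integrals such as $\int K_n(s, y)/(x-s)\, d\mu(s)$ via the CD formula and the Stieltjes transform, producing factors of $F_\mu(x_0 + i0)$ and $\overline{F_\mu(x_0 + i0)}$ in the limit, both finite by hypothesis. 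The resulting asymptotics match the Stieltjes transform prediction $\widetilde{w}(x_0) = w(x_0)/|F_\mu(x_0 + i0)|^2$ arising from $F_{\widetilde{\mu}}(z) = b_1 - z - 1/F_\mu(z)$.

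The main obstacle is cleanly separating bulk from tail: extended over all of $\bbR^2$, the bulk integral is only conditionally convergent (the sine kernel decays too slowly at infinity), so the finite-$\delta$ cutoff must be matched carefully against tail contributions. A workable strategy is to first move $x, y$ slightly into $\bbC^+$, taking advantage of the uniformity on complex compacts in the universality hypothesis, where the individual terms in the identity factor cleanly through $F_\mu(x)$, $F_\mu(y)$, and the Weyl solution $\psi_j(z) = q_j(z) + F_\mu(z) p_j(z)$, which lies in $\ell^2$ at infinity so the corresponding kernel sums converge as $n \to \infty$. Only after taking this $n \to \infty$ limit with $\Ima x, \Ima y > 0$ would I pass to the real-axis boundary using the strong Lebesgue hypothesis on $F_\mu$, yielding the sine-kernel limit with weight $\widetilde{w}(x_0)$ and the same $\rho(x_0)$.
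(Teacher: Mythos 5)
Your starting identity is exactly the one the paper uses: expanding $q_j(x)q_j(y)$ via $q_j(x)=\int\frac{p_j(x)-p_j(t)}{x-t}\,d\mu(t)$ decomposes $\wti{K}_n(x,y)$ into $K_n(x,y)$ times a product of Stieltjes transforms, a double integral $\frac1n\iint \frac{K_n(t,s)}{(x-t)(y-s)}\,d\mu(t)\,d\mu(s)$, and two mixed integrals (equivalently, your Weyl-solution form $q_j=\psi_j-F_\mu p_j$ with $\psi_j(z)=\int \frac{p_j(t)}{t-z}\,d\mu(t)$), and your target $\wti{w}(x_0)=w(x_0)/|F_\mu(x_0+i0)|^2$ is correct. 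But the substance of the proof lies precisely where your plan is thinnest. The limits of the double and mixed integrals in the $1/n$ window are not supplied by your argument: the $\ell^2$ property of the Weyl solution gives $\sum_j|\psi_j(x_0+a/n)|^2=\Ima F_\mu(x_0+a/n)/\Ima(a/n)\sim n$, i.e.\ at best boundedness of $\frac1n\sum_{j<n}\psi_j\psi_j$, never its value; the value genuinely depends on $\rho(x_0)$ (it is a double sine-kernel integral), so the bulk/tail matching you yourself flag as ``the main obstacle'' cannot be bypassed. In the paper this is Lemma 3.1: after cutting at scale $M/n$, the tails and the singular part are controlled by Bessel's inequality, $\sum_j\bigl|\int_I \frac{p_j(t)}{z/n-t}\,d\mu(t)\bigr|^2=\int_I\frac{d\mu(t)}{|z/n-t|^2}$, combined with integration-by-parts estimates that use the strong Lebesgue point hypothesis, giving errors $O(M^{-1/2})$; nothing in your sketch replaces these estimates (and note the tail of the Stieltjes-transform factors is not small at all --- it is what produces $F_\mu(x_0+i0)$ --- so the terms must be treated separately, as in the paper's $A_n,B_n,C_n$ and in \cite{bs}).

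Second, your proposed order of limits is not legitimate. For fixed $x,y$ with $\Ima x,\Ima y>0$, both $K_n(x,y)$ and $\wti{K}_n(x,y)$ grow exponentially in $n$ (the Weyl solution decays exponentially, so $p_n$ grows, by constancy of the Wronskian), so ``take $n\to\infty$ off the axis, then pass to the real boundary'' is exactly the forbidden interchange of limits; the only meaningful setting keeps $x=x_0+a/n$, $y=x_0+b/n$ with complex $a,b$. There the paper's clean evaluation requires $\Ima a>0>\Ima b$: this is how $F_\mu(x_0+\frac{a}{n})F_\mu(x_0+\frac{b}{n})\to|F_\mu(x_0+i0)|^2$ and how the mixed terms cancel, via contour integration, against the part of the double integral carrying $w(x_0)=\frac1\pi\Ima F_\mu(x_0+i0)$; with both $a,b\in\bbC^+$, as you propose, the leading factor is $F_\mu(x_0+i0)^2$ rather than $|F_\mu(x_0+i0)|^2$, and you give no mechanism producing the modulus squared. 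Finally, the theorem asserts uniform convergence for $a,b$ in compact subsets of $\bbC$, including real $a,b$; the paper gets this by setting $b=\bar a$ to bound $\frac1n\sum_{j<n}\|\Phi_j(x_0+\frac{a}{n})\|^2$ off the axis, propagating that bound to all compacts of $\bbC$ by a transfer-matrix Gronwall argument adapted from \cite{als}, and then invoking normal families. Your proposal contains no substitute for this step, and the strong Lebesgue hypothesis on $F_\mu$ alone cannot provide it.
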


\begin{remark}
There is an extensive literature, going  back to Kato, on the stability of absolutely continuous spectrum of Schr\"odinger operators and Jacobi matrices \cite{agmon, birman, bl, ck, deiftkillip, denisov, enss, ks, kato1, kato2, killipsimon, kuroda1, kuroda2, last, pearson}. The asymptotics of $K_n(x,y)$ for $x-y\sim \frac{1}{n}$ as $n \rightarrow \infty$ are connected to the microscopic behavior of zeros of $p_n$ which are eigenvalues of truncated operators (for example, universality implies clock behavior \cite{LeLu1, Simon-CD}) and so stability of universality is a delicate issue. To the best of our knowledge, the current paper is the first work to deal with the issue of stability of these asymptotics.   
\end{remark}

Now, let $\beta_1 \in \bbR$ and let $\mu^{(\beta_1)}$ be the orthogonality measure whose Jacobi parameters are $\{a_n, b_n+\beta_1 \delta_{n1}\}_{n=1}^\infty$. Denote the corresponding orthogonal polynomials by $\{p_n^{(\beta_1)}\}_{n=0}^\infty$, and the corresponding CD kernel
\beq \label{eq:betaCDkernel}
K_n^{(\beta_1)}(x,y)=\sum_{j=0}^{n-1}p_j^{(\beta_1)}(x)p_j^{(\beta_1)}(y).
\eeq
As we show in Section 3, Theorem \ref{thm:secondkind} implies

\begin{theorem} \label{thm:rankone}
Under the assumptions of Theorem \ref{thm:secondkind}, for any $\beta_1 \in \bbR$,
\beq \label{universality-perturbed}
\lim_{n \rightarrow \infty}\frac{K_n^{(\beta_1)}\left(x_0+\frac{a}{n},x_0+\frac{b}{n} \right)}{n}=\frac{\sin\left(\pi \rho(x_0)(b-a) \right)}{\pi w^{(\beta_1)}(x_0)(b-a)}
\eeq
uniformly for $a,b$ in compact subsets of $\bbC$, where $w^{(\beta_1)}(x_0) \neq 0$ is the weight of $d\mu^{(\beta_1)}$ at $x_0$.
\end{theorem}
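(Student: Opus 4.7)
The plan hinges on the identity
\beq
p_n^{(\beta_1)}(x) = p_n(x) - \beta_1 q_n(x), \qquad n \geq 0, \label{eq:plan-poly-identity}
\eeq
which will follow from the fact that the substitution $b_1 \mapsto b_1 + \beta_1$ modifies only the $n=0$ step of \eqref{recurrence}: both sides of \eqref{eq:plan-poly-identity} satisfy the common three-term recurrence for $n \geq 1$ (whose coefficients do not involve $b_1$) and agree at $n = 0, 1$. Since the off-diagonal parameters $a_n$ are unchanged by the perturbation, I would substitute \eqref{eq:plan-poly-identity} into the Christoffel--Darboux formula for $K_n^{(\beta_1)}$ and expand to obtain the decomposition
\beq
K_n^{(\beta_1)}(x,y) = K_n(x,y) - \beta_1 R_n(x,y) + \beta_1^2 \wti K_n(x,y), \label{eq:plan-decomp}
\eeq
with mixed kernel $R_n(x,y) = \sum_{j=0}^{n-1}[p_j(x) q_j(y) + q_j(x) p_j(y)]$.

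By the hypothesis inherited from Theorem \ref{thm:secondkind}, $K_n(x_0+a/n, x_0+b/n)/n$ has the prescribed sine-kernel limit with weight $w(x_0)$, and Theorem \ref{thm:secondkind} itself supplies the analogous limit for $\wti K_n(x_0+a/n, x_0+b/n)/n$ with weight $\wti w(x_0)$. The crux will be to identify the limit of the middle term $R_n(x_0+a/n, x_0+b/n)/n$. I plan first to perform a summation by parts, using the shared three-term recurrence of $\{p_j\}$ and $\{q_j\}$, to convert $R_n$ to a Wronskian-type expression in $p_n, p_{n-1}, q_n, q_{n-1}$ evaluated at $x$ and $y$. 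Combined with asymptotic formulas for the individual polynomials at the scaled points---a known consequence of CD-kernel universality at strong Lebesgue points, in the spirit of the Lubinsky framework for extracting $p_n$-asymptotics from CD convergence---this should yield an explicit limit for $R_n/n$, which I expect to be proportional to $\mathrm{Re}\,F_\mu(x_0+i0)$ times the sine kernel for $\mu$.

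To finish, I would invoke the rank-one Stieltjes formula $F_{\mu^{(\beta_1)}}(z) = F_\mu(z)/(1+\beta_1 F_\mu(z))$, which gives $w^{(\beta_1)}(x_0) = w(x_0)/|1 + \beta_1 F_\mu(x_0+i0)|^2 > 0$. Together with the classical identity $\wti w(x_0) = w(x_0)/|F_\mu(x_0+i0)|^2$ (coming from $F_{\wti\mu}(z) = b_1 - z - 1/F_\mu(z)$), a short algebraic check then verifies that the three limits in \eqref{eq:plan-decomp} sum exactly to $\sin(\pi\rho(x_0)(b-a))/(\pi w^{(\beta_1)}(x_0)(b-a))$, as required, and the uniformity on compacts is preserved throughout. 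The main obstacle will be the middle step: obtaining sharp enough asymptotics for $p_n, q_n$ at $x_0 + a/n$ to pin down the mixed-kernel limit, since by Cauchy--Schwarz alone one only has the boundedness of $R_n/n$. This is the technical heart of the argument and inherits its difficulty from the machinery behind Theorem \ref{thm:secondkind}.
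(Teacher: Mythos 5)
Your setup is exactly the paper's: the identity \eqref{eq:linearcomb}, the decomposition of $K_n^{(\beta_1)}$ into $K_n$, $\wti K_n$ and a mixed kernel $R_n$, the rank-one formula $w^{(\beta_1)}(x_0)=w(x_0)/|1+\beta_1F_\mu(x_0+i0)|^2$, and even your guess that $R_n/n$ converges to $2\,\mathrm{Re}\,F_\mu(x_0+i0)$ times the sine kernel are all correct (the latter is precisely \eqref{cute-formula1}--\eqref{cute-formula2}). The gap is in your proposed method for the mixed term. You plan to sum by parts to a Wronskian-type expression in $p_n,p_{n-1},q_n,q_{n-1}$ at the scaled points and then feed in ``asymptotic formulas for the individual polynomials \ldots a known consequence of CD-kernel universality at strong Lebesgue points.'' No such consequence is available: universality (even quasi universality plus the strong Lebesgue condition) only controls Ces\`aro-type averages such as $\frac1n\sum_{j<n}p_j(x_0+\frac{a}{n})q_j(x_0+\frac{b}{n})$; it does not pin down the behavior of the individual top entries $p_n, q_n$ at scale $1/n$, and in the ergodic/singular settings where these theorems are meant to apply the transfer matrices need not have any pointwise asymptotics at all (this is exactly why the Avila--Last--Simon/Lubinsky machinery works with averaged quantities). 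The Wronskian reduction therefore replaces a tractable averaged object by an intractable boundary term, and the ``technical heart'' you flag would not go through as described.

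The paper circumvents individual asymptotics entirely: substituting the integral representation \eqref{second-kind} for $q_j$ into the mixed sum rewrites $R_n(x,y)$, up to the factor $\bigl(\int\frac{d\mu(t)}{x-t}+\int\frac{d\mu(t)}{y-t}\bigr)K_n(x,y)$, in terms of the mixed integrals $\int\frac{K_n(x,t)}{y-t}d\mu(t)+\int\frac{K_n(y,t)}{x-t}d\mu(t)$ (see \eqref{eq:kernel-linear}). For $\Ima(a)>0$, $\Ima(b)<0$ the first factor converges to $2\,\mathrm{Re}\,F_\mu(x_0+i0)$, while the mixed integrals are controlled by Lemma \ref{lemma-rankone} (equations \eqref{mixed1}--\eqref{mixed2}, proved from the kernel hypothesis via Cauchy--Schwarz, completeness of $\{p_j\}$ in $L^2(d\mu)$, and Lebesgue-point estimates) and cancel in the limit by the contour-integration identity \eqref{eq:important_equality}. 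One then passes from $\Ima(a)>0>\Ima(b)$ to all of $\bbC$ by the normal-family argument from the proof of Theorem \ref{thm:secondkind}; your closing remark that ``uniformity on compacts is preserved throughout'' needs this step, since the limits are first established only off the real axis. If you replace your Wronskian step by this integral-representation argument (or by quoting Lemma \ref{lemma-rankone} and \eqref{eq:important_equality} directly), the rest of your outline, including the final algebraic identification via \eqref{eq:stieltjesrankoneac1}, goes through as you wrote it.
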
 

\begin{remark}
As we note in the proof of Theorem \ref{StabilityOfSCCDK}, the proof of Theorem \ref{thm:secondkind} yields an interesting formula for the limit of the symmetrized mixed CD kernel under the conditions of the theorem as well (see \eqref{cute-formula1} and \eqref{cute-formula2}). It is essentially this formula, together with the limit of $\widetilde{K}_n$, which is at the heart of the proof of Theorem \ref{thm:rankone}.  
\end{remark}

Theorems \ref{thm:secondkind} and \ref{thm:rankone} say that quasi universality is stable under a perturbation of $b_1$, with the additional bonus that the number $\rho(x_0)$ remains the same after the perturbation. Note that the limiting behavior of $\frac{K_n(x,x)d\mu(x)}{n}$ is stable under such a perturbation so in the case that $\rho$ is the density of this limit then this part of the stability is trivial. If we remove this restriction and allow $x_0$ to vary over a set of positive Lebesgue measure, we can also treat more general perturbations. The following two theorems are proven in Section 4.

\begin{theorem} \label{StabilityOfSCCDK}
Assume that $\mu$ has compact support with Jacobi parameters $\{a_n,b_n\}_{n=1}^\infty$ and that quasi universality holds at Lebesgue almost every $x \in A$. Let $\{\beta_k\}_{k=1}^\infty$ be a sequence of real numbers satisfying 
\beq \no
\sum_{k=1}^\infty |\beta_k| < \infty.
\eeq

Then quasi universality holds at Lebesgue a.e.\ $x \in A$ also for the measure corresponding to $\{a_n,b_n+\beta_n\}_{n=1}^\infty$.
\end{theorem}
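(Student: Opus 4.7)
The approach is to work up from the rank-one statement of Theorem~\ref{thm:rankone} to the full $\ell^1$ case in three steps: single-coordinate perturbations at an arbitrary position, finitely supported perturbations by iteration, and general $\ell^1$ by truncation and a limit argument.

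For a single perturbation at coordinate $k$ (i.e.\ $\beta_n = \delta_{nk}\beta_k$), I would first strip $\mu$ by applying Theorem~\ref{thm:secondkind} iteratively $k-1$ times: each application transfers quasi universality at $x_0$ from a measure to its once-stripped counterpart (modulo the explicit change of weight from $w$ to $\widetilde w$), and after $k-1$ iterations one obtains quasi universality at $x_0$ for the measure $\mu^{(k-1)}$ with Jacobi parameters $\{a_{n+k-1}, b_{n+k-1}\}_{n=1}^\infty$. Theorem~\ref{thm:rankone} then gives quasi universality for the measure whose Jacobi parameters are $(a_k, b_k+\beta_k, a_{k+1}, b_{k+1}, \ldots)$. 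The final ingredient is an ``inverse stripping'' statement: quasi universality at $x_0$ for a once-stripped measure implies quasi universality at $x_0$ for the original measure (given the prepended Jacobi pair). This should follow by running the proof of Theorem~\ref{thm:secondkind} in reverse, since the relationship it establishes between $K_n$ and $\widetilde K_n$ is essentially invertible once the prepended pair $(a_1, b_1)$ is fixed. Induction on the size of the support of $\beta$ then handles any finitely supported perturbation.

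For a general $\ell^1$ perturbation $\beta$, let $\beta^{(N)}$ denote the truncation of $\beta$ to its first $N$ coordinates and let $\mu_N$ denote the associated perturbed measure. By the previous step, quasi universality for $\mu_N$ holds at a.e.\ $x \in A$; intersecting over $N \in \bbN$, quasi universality for all $\mu_N$ simultaneously holds at a.e.\ $x \in A$. To conclude quasi universality for $\mu_\beta$ at such a point $x_0$, I would compare the orthogonal polynomials: $p_j^\beta$ and $p_j^{(N)}$ coincide for $j \le N$, while for $j > N$ their three-term recursions differ only in the coefficient $\beta_j$. A transfer-matrix/Gronwall estimate controls $|p_j^\beta(x) - p_j^{(N)}(x)|$ by $\sum_{m=N+1}^j |\beta_m|$ times norms of the relevant transfer matrices. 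At a.e.\ $x_0$ in $A$ these transfer matrices are bounded (such $x_0$ lies in the a.c.\ spectrum, which is preserved under $\ell^1$ diagonal perturbations), so letting $N \to \infty$ forces the CD kernel difference to vanish uniformly in $n$ at scale $1/n$, which yields quasi universality for $\mu_\beta$ at $x_0$.

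The main obstacle is this passage to the $\ell^1$ limit: the CD kernel at scale $1/n$ is a sum of $n$ polynomial products, and controlling it uniformly in $n$ as $N \to \infty$ requires careful joint use of the $\ell^1$-summability of $\beta$ (so that the tail $\sum_{m > N}|\beta_m|\to 0$) and of transfer-matrix boundedness at quasi-universal points. A secondary technical point is establishing the inverse of Theorem~\ref{thm:secondkind} in the desired form, but this should be straightforward from the symmetry of that argument.
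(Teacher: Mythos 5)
There is a genuine gap in your final ($\ell^1$-limit) step, and it is the crux of the whole theorem. You assert that at a.e.\ $x_0\in A$ the transfer matrices are bounded because ``such $x_0$ lies in the a.c.\ spectrum, which is preserved under $\ell^1$ diagonal perturbations.'' Preservation of a.c.\ spectrum does not give pointwise boundedness of $\|\Phi_j(x_0)\|$; indeed a.e.\ boundedness of transfer matrices on the a.c.\ spectrum is essentially the Schr\"odinger conjecture, which is false in general, and nothing in the hypotheses supplies it here. What quasi universality does give (via the argument in the proof of Theorem \ref{thm:secondkind}) is only the Ces\`aro bound $\sup_n \frac1n\sum_{j<n}\|\Phi_j(x_0+\frac{a}{n})\|^2<\infty$, hence at best $\|\Phi_j(x_0)\|=O(j^{1/2})$. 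With that growth, your Gronwall/variation-of-parameters comparison of $p_j^{\beta}$ with $p_j^{(N)}$ requires control of $\sum_{m>N}|\beta_m|\,\|\Phi_m\|\,\|\Phi_{m-1}\|$, i.e.\ essentially $\sum_m m|\beta_m|<\infty$, which is strictly stronger than $\ell^1$. So the passage $N\to\infty$ does not close, and the uniform-in-$n$ vanishing of the CD-kernel difference at scale $1/n$ is not established. (A secondary, repairable issue: your ``inverse stripping'' of Theorem \ref{thm:secondkind} is not a formal reversal; expressing $p_n(\mu)$ through the stripped data brings in the mixed sums $\frac1n\sum p_jq_j$, so one must redo the Section 3 computations, as in \eqref{cute-formula1}--\eqref{cute-formula2}.)

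The paper avoids truncation entirely and this is exactly how it sidesteps the missing pointwise bound. It reduces quasi universality, via \cite{als}, to the a.e.\ existence of the limits $\frac1n\sum_{j<n}|p_j^{(\beta)}(x)|^2$ and $\frac1n\sum_{j<n}|q_j^{(\beta)}(x)|^2$, writes $p_k^{(\beta)}=u_{1,k}p_k+u_{2,k}q_k$ and $q_k^{(\beta)}=v_{1,k}p_k+v_{2,k}q_k$ by variation of parameters, and proves convergence of $u_k,v_k$ using the Kiselev--Last--Simon theorem, whose hypothesis $\sum_k|\beta_k|\bigl(p_k(x)^2+q_k(x)^2\bigr)<\infty$ is verified for Lebesgue-a.e.\ $x\in A$ not by any pointwise bound but by a Fubini argument: $\int\sum_k|\beta_k|p_k^2\,d\mu=\sum_k|\beta_k|<\infty$ and likewise for $q_k^2$ against $\wti\mu$, combined with the fact that on $A$ the a.c.\ parts of $\mu$ and $\wti\mu$ are equivalent to Lebesgue measure. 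Convergence of $u_k,v_k$ together with the already-known limits of $\frac1n\sum p_j^2$, $\frac1n\sum q_j^2$ and of the mixed sums $\frac1n\sum p_jq_j$ (obtained from the formula \eqref{cute-formula1}--\eqref{cute-formula2}, i.e.\ from the machinery of Theorem \ref{thm:rankone}) then yields the required limits for the perturbed polynomials. If you want to salvage your scheme, you would need to replace the boundedness claim by an integrated (measure-theoretic) control of $\sum|\beta_m|$ against $p_m^2,q_m^2$ of exactly this kind, at which point you have reconstructed the paper's argument.
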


For random perturbations we can allow slower decay:

\begin{theorem} \label{RandStabilityofSCCDK}
Assume that $\mu$ has compact support with Jacobi parameters $\{a_n,b_n\}_{n=1}^\infty$, orthogonal polynomials $\{p_n(x)\}_{n=0}^\infty$, and second kind orthogonal polynomials $\{q_n(x)\}_{n=0}^\infty$. Assume that quasi universality holds at Lebesgue almost every $x \in A$. 
Let $\left\{\beta_{\omega,k} \right\}_{k=1}^\infty$ be a sequence of independent random variables with zero mean satisfying 
\beq \label{L2}
\sum_{k=1}^\infty \mathbb{E}\left( \beta_{\omega,k}^2 \right) \left(|p_k(x)|+|p_{k-1}(x)|+|q_k(x)|+|q_{k-1}(x)| \right)^4<\infty
\eeq
for Lebesgue a.e.\ $x \in A$. 

Then, with probability one, quasi universality holds at Lebesgue a.e.\ $x \in A$ also for the measure corresponding to $\{a_n,b_n+\beta_{\omega,n} \}_{n=1}^\infty$.
\end{theorem}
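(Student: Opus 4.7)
The plan is to mirror the strategy behind Theorem \ref{StabilityOfSCCDK}, replacing the deterministic $\ell^{1}$ summability of $\{\beta_{k}\}$ by a martingale/variance argument that exploits the independence and zero mean of $\{\beta_{\omega,k}\}$. Fix $x_{0}\in A$ at which quasi universality holds for $\mu$ and at which the summand in \eqref{L2} is summable; by Fubini's theorem it suffices to prove that, almost surely, \eqref{eq:quasiuniversality} persists at $x_{0}$ for the perturbed measure $\mu_{\omega}$. The essential input is the explicit formula for a one-site perturbation $b_{1}\mapsto b_{1}+\beta$ produced in the proof of Theorem \ref{thm:rankone} (the ``cute formulas'' \eqref{cute-formula1} and \eqref{cute-formula2}). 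Via the standard stripping relation between $\mu$ and the measure obtained by dropping the first $k-1$ Jacobi parameters, the same formula applies verbatim to a perturbation $b_{k}\mapsto b_{k}+\beta$, with $p_{0},p_{1},q_{0},q_{1}$ replaced by $p_{k-1},p_{k},q_{k-1},q_{k}$.

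Iterating this formula over $k$ for the truncated perturbation $\{\beta_{\omega,k}\mathbf{1}_{k\leq N}\}$ (which is $\ell^{1}$ and hence already handled by Theorem \ref{StabilityOfSCCDK} at each fixed $N$), I would obtain an expansion
\[
\frac{K_{n}^{(\beta_{\leq N})}(x,y)-K_{n}(x,y)}{n} \;=\; \sum_{k=1}^{N}\beta_{\omega,k}\,\frac{T_{n,k}(x,y)}{n} \;+\; R_{n,N}(x,y),
\]
where $T_{n,k}$ is a bilinear expression in $p_{k-1},p_{k},q_{k-1},q_{k}$ evaluated at $x,y,x_{0}$ for the \emph{unperturbed} measure, and $R_{n,N}$ is a remainder quadratic and higher in the $\beta$'s. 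Taking $(x,y)=(x_{0}+a/n,x_{0}+b/n)$, continuity of the polynomials near $x_{0}$ allows replacing $T_{n,k}(x,y)$ by $T_{n,k}(x_{0},x_{0})$ modulo $\oh(1)$-errors as $n\to\infty$. The leading piece then becomes the partial sum of a martingale in $k$ whose $L^{2}$-norm is bounded, uniformly in $n$, by a constant times the series in \eqref{L2}; Kolmogorov's two-series theorem together with Doob's inequality yields its almost-sure convergence as $N\to\infty$, uniformly in $a,b$ in compact subsets of $\bbC$. The remainder $R_{n,N}$ decomposes by independence into a diagonal part dominated by $\sum_{k}\beta_{\omega,k}^{2}(|p_{k-1}(x_{0})|+|p_{k}(x_{0})|+|q_{k-1}(x_{0})|+|q_{k}(x_{0})|)^{4}$, which is almost surely finite because its expectation is finite by \eqref{L2}, and an off-diagonal martingale whose summed variance is controlled by Cauchy--Schwarz against the same series; divided by $n$, both pieces are $\oh(1)$ almost surely.

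The main obstacle is the uniform-in-$n$ tightness required to interchange the limit $n\to\infty$ with the infinite sum over $k$. This rests on two ingredients: quasi universality for $\mu$ at $x_{0}$ forces $K_{n}(x_{0},x_{0})=\Oh(n)$, which gives Cesàro-boundedness of $p_{j}(x_{0})^{2}$, and the pointwise $p_{j}^{4}$-weighted summability in \eqref{L2} supplies an envelope strong enough for dominated convergence on the random series. A secondary technical point is that at each stage of the iteration the one-site formula ought really to be applied to the already-perturbed measure $\mu_{\omega}^{(k-1)}$ rather than to $\mu$; this is handled by a resolvent/Jost-function comparison showing that at $x_{0}$ the polynomials for $\mu_{\omega}^{(k-1)}$ differ from those of $\mu$ by an amount again controlled in $L^{2}$ by the series in \eqref{L2}, uniformly in $k$. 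Assembling these estimates produces, almost surely, a limit of $K_{n}^{(\beta)}(x_{0}+a/n,x_{0}+b/n)/n$ of sine-kernel form with the same $\rho(x_{0})$ and with the weight $w(x_{0})$ in the denominator replaced by the weight $w_{\omega}(x_{0})$ of $\mu_{\omega}$ at $x_{0}$, which is precisely quasi universality for $\mu_{\omega}$ at $x_{0}$.
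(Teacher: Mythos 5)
Your proposal takes a genuinely different route from the paper, and as written it has real gaps. The paper's proof is short and structural: it repeats the variation-of-parameters reduction of Theorem \ref{StabilityOfSCCDK}, writing $p_k^{(\beta)}=u_{1,k}p_k+u_{2,k}q_k$ and $q_k^{(\beta)}=v_{1,k}p_k+v_{2,k}q_k$, observes that hypothesis \eqref{L2} is exactly the hypothesis of Lemma 3.1 of \cite{bl} (with $f_+\equiv1$), so that for Lebesgue a.e.\ $x\in A$ the coefficient vectors $u_k(x),v_k(x)$ converge almost surely, and then feeds the resulting a.e.\ existence of $\lim_n\frac1n\sum_{j<n}|p_j^{(\beta)}(x)|^2$ and $\lim_n\frac1n\sum_{j<n}|q_j^{(\beta)}(x)|^2$ into the criterion of \cite{als} (together with the a.e.\ existence of $\lim_n\frac1n\sum p_j^2$, $\lim_n\frac1n\sum q_j^2$, $\lim_n\frac1n\sum p_jq_j$ for the unperturbed measure, which comes from Theorem \ref{thm:secondkind} and \eqref{cute-formula1}--\eqref{cute-formula2}); Fubini finishes. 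Your scheme instead tries to expand $K_n^{(\beta)}-K_n$ perturbatively in the $\beta$'s by iterating the one-site formula.

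Two steps of that scheme do not hold up. First, your assertion that the remainder ``quadratic and higher in the $\beta$'s'' is $o(1)$ after division by $n$ is false, and the failure is visible already in the one-site case you use as the building block: by \eqref{eq:kernel-linear} the quadratic term is $\beta_1^2\widetilde K_n(x,y)/n$, which by Theorem \ref{thm:secondkind} converges to $\beta_1^2|F(x_0+i0)|^2\sin(\pi\rho(x_0)(b-a))/(\pi w(x_0)(b-a))\neq 0$. These second-order contributions are exactly what turn $w(x_0)$ into the perturbed weight (cf.\ \eqref{eq:stieltjesrankoneac1}), so discarding them is inconsistent with your own concluding claim that the limit carries $w_\omega(x_0)$. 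Second, the two pivotal steps of the iteration are asserted, not proved: (a) that the one-site formula applies ``verbatim'' to a perturbation at site $k$ with $p_{k-1},p_k,q_{k-1},q_k$ substituted --- stripping changes the measure whose CD kernel appears, and relating that kernel back to $K_n(\mu;\cdot,\cdot)$ at scale $1/n$ requires precisely the transfer-matrix/variation-of-parameters bookkeeping you are trying to bypass; and (b) that each one-site step, which should really be applied to the already-perturbed measure $\mu_\omega^{(k-1)}$, can be reduced to the unperturbed data by a ``resolvent/Jost-function comparison'' controlled uniformly in $k$ by \eqref{L2}. That uniform control of the perturbed polynomials is the entire content of the Breuer--Last lemma invoked in the paper; without it, your interchange of $n\to\infty$ with the sum over $k$ and your martingale estimates rest on unproved bounds. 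I recommend abandoning the kernel expansion and arguing as in Theorem \ref{StabilityOfSCCDK}, with \cite{kls} replaced by Lemma 3.1 of \cite{bl} and a final application of Fubini to pass from ``a.s.\ for a.e.\ fixed $x$'' to the stated conclusion.
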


\begin{remark}
We shall prove Theorems \ref{StabilityOfSCCDK} and \ref{RandStabilityofSCCDK} by showing that the existence of $\lim_{n \rightarrow \infty}{\frac{K_n(x,x)}{n}}$ and $\lim_{n \rightarrow \infty}\frac{\wti{K}_n(x,x)}{n}$, for a.e.\ $x$, is stable under the perturbations considered.
The results of \cite{als} then say that the existence of these limits for a.e.\ $x$ implies \eqref{eq:weakuniversality1.1} for a.e.\ $x$ (they prove it there for $a,b \in \bbR$, but the proof extends to $a,b \in \bbC$). Letting $\rho(x) \equiv \lim_{n \rightarrow \infty} \frac{K_n(x,x)}{n}w(x)$ we see, by Remark \ref{rem:weakuniversalityequiv}, that these conditions imply quasi universality at $x$. Theorem \ref{thm:secondkind} completes the picture to show that in fact quasi universality almost everywhere is equivalent to the a.e.\ existence of these limits.

If we assume that $\frac{K_n(x,x)}{n}d\mu(x)$ has a weak limit, $\nu$, and that $\nu$ is absolutely continuous with weight $\wti{\rho}(x)$, then this limit is stable under the perturbations of Theorem \ref{StabilityOfSCCDK} and \ref{RandStabilityofSCCDK}. However, even if we assume that $\lim_{n \rightarrow \infty}\frac{K_n(x,x) w(x)}{n}=\wti{\rho}(x)$ for a.e.\ $x \in A$, we do not know how to deduce this convergence for the CD kernel of the perturbed problem. The issue is that in general weak convergence and the existence of a pointwise limit do not guarantee that the pointwise limit coincides with the weak limit. Equality of these limits would follow, for example, if we know that $\frac{K_n(x,x)}{n}$ is uniformly bounded on an interval, but we do not want to assume this. This is the reason behind our notion of quasi universality.
\end{remark}

On physical grounds, one expects a connection between local continuity of $\mu$ at $x$ and the asymptotics of $\frac{K_n\left(x+\frac{a}{n},x+\frac{b}{n} \right)}{n}$. In particular, for all known examples where $\mu$ is absolutely continuous at $x$, universality has been shown to hold there. A significant motivating factor for this paper was the fact that absolute continuity of $\mu$ is stable under the perturbations considered above.

It is important to note, however, that universality can occur also for purely singular measures, as demonstrated in \cite{Bjat}. Unfortunately, we have nothing to say on the issue of stability of universality in such a case. Still, the work in \cite{Bjat} raises an interesting question: assuming universality holds at $x$, what can one say about the local continuity of $\mu$ there? The next theorem, proven in Section 5, says that universality at $x$ implies that $\mu$ cannot have a pure point there.

\begin{theorem} \label{no-point}
Assume $\mu$ has compact support. Fix $x \in \bbR$. If there exists a number $\rho(x) >0$ such that for any $a, b \in \bbR$,
\beq \label{universality}
\lim_{n \rightarrow \infty}\frac{K_n \left(x+\frac{a}{n},x+\frac{b}{n} \right)}{K_n(x,x)}=
\frac{\sin \left(\pi \rho(x)(b-a) \right)}{\pi \rho(x) (b-a)},
\eeq
then
\beq \label{infinite-limit}
\lim_{n \rightarrow \infty} K_n(x,x) =\infty.
\eeq
\end{theorem}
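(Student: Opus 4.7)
The plan is to proceed by contradiction. Suppose $K_n(x,x)$ does not tend to $+\infty$. Since $K_n(x,x)=\sum_{j=0}^{n-1}p_j(x)^2$ is non-decreasing in $n$ and $K_1(x,x)=p_0(x)^2=1$, under this assumption $K_n(x,x)\to K_\infty$ for some finite $K_\infty\in[1,\infty)$. Setting $a=b$ in the hypothesis \eqref{universality} gives $K_n(x+a/n,x+a/n)/K_n(x,x)\to 1$, whence $K_n(x+a/n,x+a/n)\to K_\infty$ for every $a\in\bbR$.

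The key step is to upgrade this diagonal convergence to the off-diagonal claim
\beq \label{eq:proposal-offdiag}
\lim_{n\to\infty} K_n\!\left(x+\frac{a}{n},\,x+\frac{b}{n}\right)=K_\infty \qquad\text{for all }a,b\in\bbR.
\eeq
Once \eqref{eq:proposal-offdiag} is established, combining it with \eqref{universality} forces $K_\infty=K_\infty\cdot \sin(\pi\rho(x)(b-a))/(\pi\rho(x)(b-a))$ for every $a,b\in\bbR$; choosing $b-a=1/\rho(x)$ makes the right-hand side $0$, contradicting $K_\infty\geq 1$. To prove \eqref{eq:proposal-offdiag} I would fix $M\in\bbN$ and split
\beq \no
K_n\!\left(x+\tfrac{a}{n},\,x+\tfrac{b}{n}\right)=\sum_{j=0}^{M}p_j\!\left(x+\tfrac{a}{n}\right)p_j\!\left(x+\tfrac{b}{n}\right)+\sum_{j=M+1}^{n-1}p_j\!\left(x+\tfrac{a}{n}\right)p_j\!\left(x+\tfrac{b}{n}\right).
\eeq
The head converges, as $n\to\infty$ with $M$ fixed, to $S_M:=\sum_{j=0}^{M}p_j(x)^2$ by continuity of each polynomial $p_j$; by Cauchy-Schwarz the absolute value of the tail is bounded by $\bigl[\bigl(K_n(x+a/n,x+a/n)-\sum_{j=0}^{M}p_j(x+a/n)^2\bigr)\bigl(K_n(x+b/n,x+b/n)-\sum_{j=0}^{M}p_j(x+b/n)^2\bigr)\bigr]^{1/2}$, which tends to $K_\infty-S_M$ as $n\to\infty$. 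Coupled with the Cauchy-Schwarz upper bound $|K_n(x+a/n,x+b/n)|\leq[K_n(x+a/n,x+a/n)K_n(x+b/n,x+b/n)]^{1/2}\to K_\infty$, and letting $M\to\infty$ so that $S_M\to K_\infty$, this yields \eqref{eq:proposal-offdiag}.

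The conceptual content of the argument is that boundedness of the diagonal $K_n(x,x)$ --- equivalently, $\ell^2$-summability of the sequence $(p_j(x))_j$ --- combined with continuity of each individual $p_j$ leaves no room for the nontrivial oscillation demanded by the sine kernel on the scale $1/n$. I do not anticipate a serious technical obstacle here: once the head-tail split is chosen, Cauchy-Schwarz and the diagonal convergence carry out all the work.
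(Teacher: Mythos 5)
Your proof is correct and follows essentially the same route as the paper: both arguments exploit the finiteness of $\lim_n K_n(x,x)$ together with continuity of the finitely many head polynomials and a Cauchy--Schwarz bound on the tail to rule out oscillation on the scale $1/n$, then contradict the sine-kernel limit. The only cosmetic difference is that you show $K_n(x+\frac{a}{n},x+\frac{b}{n})\to K_\infty$ directly, whereas the paper phrases the same mechanism as $\sum_{j}\bigl(p_j(x+\frac{a}{n})-p_j(x)\bigr)^2\to 0$ and compares it with the limit computed from \eqref{universality}.
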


\begin{remark}
The condition \eqref{infinite-limit} is equivalent to $\mu \left(\{x\} \right)=0$.
This is because $\mu$ has a pure point at $x$ iff the operator of multiplication by $t$ on $L^2(d\mu(t))$ has an eigenvalue at $x$. Since $\{p_j(\cdot)\}_{j=0}^\infty$ is an orthonormal basis for $L^2(d\mu)$, this holds iff $\sum_{j=0}^\infty |p_j(x)|^2 < \infty$.
\end{remark}

The rest of this paper is structured as follows: Section 2 has some preliminary facts we shall need from the theory of rank one perturbations. The proofs of Theorems \ref{thm:secondkind} and \ref{thm:rankone} are given in Section 3. The proofs of Theorems \ref{StabilityOfSCCDK} and \ref{RandStabilityofSCCDK} are given in Section 4. Finally, Section 5 has the proof of Theorem \ref{no-point}.


\section{Preliminaries}

Let $\mu$ be a probability measure on $\bbR$, whose support is a compact, infinite set. Then $\mu$ is the spectral measure of the operator of multiplication by $x$ on the space $L^2(d\mu(x))$. The recursion relation \eqref{recurrence} says that in the orthonormal basis $\{p_n(\cdot) \}_{n=0}^\infty$, this operator is given by a Jacobi matrix 
\beq \label{eq:Jacobi}
J =\left(
\begin{array}{ccccc}
b_1    & a_1 & 0      & 0      & \dots \\
a_1    & b_2 & a_2    & 0      & \dots \\
0      & a_2 & b_3    & a_3    & \ddots \\
\vdots & \vdots   & \ddots & \ddots & \ddots \\
\end{array} \right).
\eeq
Thus $\mu$ is the spectral measure of the operator $J$ on $\ell^2(\bbN)$ and the vector $\delta_1=\left(\begin{array}{c} 1 \\ 0 \\ 0 \\ \vdots \\ \end{array} \right)$. 

If follows that, for $\beta_1 \in \bbR$, the measure $\mu^{(\beta_1)}$ corresponding to the Jacobi parameters $\{a_n, b_n+\beta_1 \delta_{n1}\}_{n=1}^\infty$ is also the spectral measure of the Jacobi matrix
\beq \label{eq:Jacobiperturbed1}
J^{(\beta_1)} =\left(
\begin{array}{ccccc}
b_1+\beta_1    & a_1 & 0      & 0      & \dots \\
a_1    & b_2 & a_2    & 0      & \dots \\
0      & a_2 & b_3    & a_3    & \ddots \\
\vdots & \vdots   & \ddots & \ddots & \ddots \\
\end{array} \right).
\eeq
But $J^{(\beta_1)} \psi= J \psi +\beta_1 \langle \delta_1, \psi \rangle \delta_1$, namely a rank one perturbation of $J$. Thus, we need some facts from the theory of rank one perturbations of self-adjoint operators. A comprehensive review of the relevant theory is given in \cite{simon-rankone}. Here we shall merely collect the facts we will use.

We first define the Stieltjes (aka Cauchy/Borel) transform of $\mu$ by
\beq \label{eq:stieltjes}
F_\mu(z)=\int\frac{d\mu(t)}{t-z}.
\eeq 
$F_\mu$ is analytic on $\bbC \setminus \supp(\mu)$ and has positive imaginary part on $\bbC_+=\{z \mid \Ima(z)>0 \}$. The limit $\lim_{\varepsilon \rightarrow 0^+}F(x+i \varepsilon) \equiv F(x+i0)$ exists for strong Lebesgue points of $\mu$ and is related to $\mu$ through the fact that 
\beq \label{eq:stieltjeslimit}
\frac{1}{\pi}\Ima F_\mu(x+i\varepsilon) dx \rightarrow d\mu(x)
\eeq
weakly as $\varepsilon \rightarrow 0^+$. In fact, for the absolutely continuous part we have
\beq \label{eq:stieltjeslimitac}
\frac{1}{\pi} \Ima F_\mu(x+i0) =w(x)
\eeq
whenever $x$ is a strong Lebesgue point of $\mu$, in the sense that the limit exists at $x$ and is equal to $w(x)$ there. This implies that
\beq \label{eq:stieltjeslimitac1}
\frac{1}{\pi} \Ima F_\mu(x+i0)dx=d\mu_{\textrm{ac}}(x)
\eeq
since a.e.\ point with respect to $\mu_{\textrm{ac}}$ is a strong Lebesgue point of $\mu$.

The identification 
\beq \no
F_\mu(z)=\langle \delta_1, \left(J-z\right)^{-1} \delta_1 \rangle,
\eeq 
through the spectral theorem, and the resolvent formula
\beq \no
\left( J^{(\beta_1)}-z \right)^{-1}-\left(J-z \right)^{-1}=-\beta_1 \left \langle \delta_1, \left(J^{(\beta_1)}-z \right)^{-1}\cdot \right \rangle \left(J-z \right)^{-1}\delta_1
\eeq
imply that
\beq \label{eq:stieltjesrankone}
F^{(\beta_1)}_\mu(z)=\frac{F_\mu(z)}{1+\beta_1 F_\mu(z)}
\eeq
where $F_\mu^{(\beta_1)}(z)=\int \frac{d\mu^{(\beta_1)}(t)}{t-z}$. This immediately implies
\beq \label{eq:stieltjesrankoneac}
\Ima F^{(\beta_1)}_\mu(z)=\frac{\Ima F_\mu(z)}{\left|1+\beta_1 F_\mu(z) \right|^2}=\frac{\Ima F_\mu(z)}{1+2\beta_1 \textrm{Re}(F_\mu(z))+\beta_1^2 \left| F_\mu(z) \right|^2}
\eeq
so that for $x$ a strong Lebesgue point of $\mu$,
\beq \label{eq:stieltjesrankoneac1}
\begin{split}
w^{(\beta_1)}(x)&=\frac{1}{\pi}\frac{\Ima F_\mu(x+i0)}{1+2\beta_1 \textrm{Re}(F_\mu(x+i0))+\beta_1^2 \left| F_\mu(x+i0) \right|^2} \\
&=\frac{w(x)}{1+2\beta_1 \textrm{Re}(F_\mu(x+i0))+\beta_1^2 \left| F_\mu(x+i0) \right|^2}. \\
\end{split}
\eeq

The case ``$\beta_1=\infty$'' is of particular significance. By \cite[Section 4]{GS} (also see \cite{simon-rankone}), as $\beta_1 \rightarrow \infty$, $J^{(\beta_1)}$ converges in the strong resolvent sense to the operator 
\beq \label{eq:secondkindoperator}
J^{(\infty)}=\left(
\begin{array}{ccccc}
0    & 0 & 0      & 0      & \dots \\
0   & b_2 & a_2    & 0      & \dots \\
0      & a_2 & b_3    & a_3    & \ddots \\
\vdots & \vdots   & \ddots & \ddots & \ddots \\
\end{array} \right).
\eeq
In addition, if $\wti{\mu}$ is the spectral measure of $J^{(\infty)}$ and the vector
$J \delta_1-\langle \delta_1, J \delta_1 \rangle \delta_1=\left(\begin{array}{c} 0 \\ a_1 \\ 0 \\ \vdots \\ \end{array} \right)$, then 
\beq \label{eq:secondkindstieltjes}
F_{\wti{\mu}}(z)=-\frac{1}{F_\mu(z)}
\eeq 
which means that
\beq \label{eq:secondkindstieltjes1}
\Ima F_{\wti{\mu}}(z)=\frac{\Ima F_{\mu}(z)}{\left| F_\mu(z) \right|^2}.
\eeq
The connection of this to $q_n$, the second kind polynomials associated with $\mu$, is through the fact that if $p^\infty_n$ are the orthogonal polynomials associated with the Jacobi matrix
\beq \no
\wti{J}=\left(
\begin{array}{ccccc}
 b_2 & a_2    & 0      & \dots \\
 a_2 & b_3    & a_3    & \ddots \\
0 & a_3 & b_4 & \ddots \\
\vdots & \vdots   & \ddots & \ddots & \ddots \\
\end{array} \right)
\eeq
then $q_n(x)=a_1^{-1} p_{n-1}^\infty(x)$. Since $\wti{\mu}$ is $a_1^2$ times the orthogonality measure of the $p_n^\infty$, we see that $q_n$ is precisely the $(n-1)$'th ortho\emph{normal} polynomial with respect to $\wti{\mu}$. Thus, $\wti{\mu}$ is the measure of orthogonality for the $q_n$'s. It follows from \eqref{eq:stieltjeslimitac}, \eqref{eq:stieltjeslimitac1}, and  \eqref{eq:secondkindstieltjes1} that whenever $x$ is a strong Lebesgue point of $\mu$ we may use  
\beq \label{eq:secondkindweight}
\wti{w}(x)=\frac{w(x)}{|F(x+i0)|^2}
\eeq
for the Radon-Nikodym derivative, $\wti{w}$, of $\wti{\mu}$.

An important part of our analysis rests on the fact that the limits above can be defined somewhat more generally. In fact, for any $a$ with $\Ima a >0$,
\beq \no
F(x+i0)=\lim_{n \rightarrow \infty}F\left(x+\frac{a}{n}\right)
\eeq
whenever $F(x+i0)$ exists. Also note that for such $a$,
\beq \no
\overline{F(x+i0)}=\lim_{n \rightarrow \infty} F\left( x+\frac{\overline{a}}{n} \right).
\eeq

To deduce Theorem \ref{thm:rankone} from Theorem \ref{thm:secondkind} we need to express $p_n^{(\beta_1)}$ as a linear combination of $p_n$ and $q_n$. This is possible since both sequences $p_n$ and $q_n$ satisfy the same recursion relation (with different boundary conditions). Since $q_0(x) \equiv 0$ and $p_0(x)=p_0^{(\beta_1)}(x)=1$, it is obvious that 
$p_n^{(\beta_1)}(x)=p_n(x)+\gamma q_n(x)$. By plugging this into \eqref{recurrence} for $p_n^{(\beta_1)}$ we immediately see that $\gamma=-\beta_1$ and so
\beq \label{eq:linearcomb}
p_n^{(\beta_1)}(x)=p_n(x)-\beta_1 q_n(x).
\eeq

\medskip
It will be convenient for us to write the recursion relation in matrix form: letting 
\beq \label{eq:one-step}
S_j(z)=\left(\begin{array}{cc} \frac{z-b_j}{a_j} & -\frac{1}{a_j} \\
a_j & 0 \end{array} \right)
\eeq
we see that 
\beq \label{eq:recursion-matrix}
\left(\begin{array}{c} p_n(z) \\ a_np_{n-1} (z) \end{array} \right)=S_n(z) \left(\begin{array}{c} p_{n-1}(z) \\ a_np_{n-2} (z) \end{array} \right).
\eeq
Note also that $\det \left( S_j(z) \right)=1$. The \emph{transfer matrix} is defined by 
\beq \label{eq:transfer}
\Phi_n(z)=S_n(z)S_{n-1}(z)\ldots S_1(z)
\eeq
so that 
\beq \label{eq:transfer1}
\left(\begin{array}{c} p_n(z) \\ a_np_{n-1} (z) \end{array} \right)=\Phi_n(z) \left(\begin{array}{c} 1 \\ 0 \end{array} \right),
\eeq
and
\beq \label{eq:transfer2}
\left(\begin{array}{c} -q_n(z) \\ -a_np_{n-1} (z) \end{array} \right)=\Phi_n(z) \left(\begin{array}{c} 0 \\ 1 \end{array} \right).
\eeq
Thus we see that
\beq \label{eq:transfer-form}
\Phi_n(z) = \left( \begin{array}{cc} p_n(z) & -q_n(z) \\
 a_n p_{n-1}(z) & -a_n q_{n-1}(z) \end{array} \right).
\eeq
Finally, we note that, by the fact that $\det \Phi_n(z) =1$,
\beq \label{eq:norm-transfer}
\|\Phi_n(z)\|=\|\left(\Phi_n(z) \right)^{-1} \|.
\eeq


\section{Proof of Theorems \ref{thm:secondkind} and \ref{thm:rankone}}

We first prove the following lemma:

\begin{lemma} \label{lemma-rankone}
Under the conditions of Theorem \ref{thm:secondkind}
\beq \label{mixed1}
\lim_{n \rightarrow \infty} \frac{1}{n}\left( \int \frac{K_n\left(x_0+\frac{a}{n},s \right)}{ \left(x_0+\frac{b}{n}-s \right)} d\mu(s) \right)=\int_{-\infty}^\infty \frac{\sin\pi \rho(x_0) \left(s-a \right)}{\pi (s-a)(b-s)}ds,
\eeq
\beq \label{mixed2}
\lim_{n \rightarrow \infty} \frac{1}{n}\left(\int  \frac{K_n\left(x_0+\frac{b}{n},t \right)}{\left(x_0+\frac{a}{n}-t \right) }d\mu(t) \right)=\int_{-\infty}^\infty\frac{\sin \pi \rho(x_0) \left(t-b \right)}{\pi (t-b)(a-t)}dt,
\eeq
and
\beq \label{notmixed}
\begin{split}
& \lim_{n \rightarrow \infty}\frac{1}{n}\left(\int \int \frac{K_n(t,s)}{\left(x_0+\frac{a}{n}-t \right) \left(x_0+\frac{b}{n}-s \right)}d\mu(t) d\mu(s) \right) \\
&\quad=\int_{-\infty}^\infty \int_{-\infty}^\infty \frac{\sin{\pi \rho(x_0)\left(t-s \right)}}{\pi (t-s)(t-a)(s-b)}w(x_0)dt ds
\end{split}
\eeq
for any $a,b$ with $\Ima(a)>0$, $\Ima(b)<0$.
\end{lemma}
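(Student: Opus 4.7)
The strategy is to prove each limit by a direct change of variables. Substituting $s = x_0 + u/n$ makes the heuristic clear: on this scale, the universality hypothesis \eqref{universality2} gives that $K_n(x_0+a/n, x_0+u/n)/n$ converges uniformly on compact $u$-sets to $\sin(\pi\rho(x_0)(u-a))/(\pi w(x_0)(u-a))$, while the strong Lebesgue property lets us replace $d\mu(s)$ near $x_0$ by $w(x_0)\,du/n$ to leading order. The three claimed right-hand sides then arise formally from this substitution; the lemma is the rigorous implementation. Note also that the right-hand side of \eqref{mixed1} is absolutely convergent because $\sin(\pi\rho(s-a))$ is bounded on $\bbR$ when $\Ima a\ne 0$ while the denominator decays like $1/s^2$, and similarly for \eqref{mixed2} and \eqref{notmixed}.

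Concretely, for \eqref{mixed1}, I would fix small $\eta>0$ and large $R>0$, and split the integral into three regions: $|s-x_0|\le R/n$, $R/n<|s-x_0|<\eta$, and $|s-x_0|\ge\eta$. In the outer region, the denominator is bounded below by $\eta/2$ for $n$ large, so Cauchy--Schwarz together with the reproducing identity $\int|K_n(y,s)|^2\,d\mu(s) = K_n(y,\bar y)$ yields $\|K_n(x_0+a/n,\cdot)\|_{L^1(d\mu)}\le \sqrt{K_n(x_0+a/n, x_0+\bar a/n)} = O(\sqrt n)$ via the universality at the pair $(a,\bar a)\in\bbC^2$; the outer contribution is therefore $O((\eta\sqrt n)^{-1})$. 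In the middle region one uses $|x_0+b/n-s|\ge |s-x_0|/2$ (valid once $R>2|b|$) and Cauchy--Schwarz, combined with $\int_{|s-x_0|>R/n} d\mu(s)/(s-x_0)^2 = O(n/R)$ (from the Poisson representation of $\Ima F_\mu$ and the finiteness of $\Ima F_\mu(x_0+i0)$), to bound the middle piece by $O(1/\sqrt R)$ uniformly in $n$. In the innermost region, substitute $s=x_0+u/n$ and apply uniform universality on $u\in[-R,R]$ together with the $L^1$-averaged approximation $w(s)\approx w(x_0)$ from strong Lebesgue, obtaining the $|u|\le R$ portion of the claimed integral in the limit. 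Sending $n\to\infty$ and then $R\to\infty$ delivers \eqref{mixed1}.

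The main technical obstacle is the singular part of $\mu$ near $x_0$, where $(x_0+b/n-s)^{-1}$ can be as large as $n/|\Ima b|$: a crude pointwise bound gives only $O(1)$, not $o(1)$. The key is to apply Cauchy--Schwarz to this piece, producing the factor $\int d\mu_{\sing}(s)/|x_0+b/n-s|^2 = \Ima F_{\mu_{\sing}}(x_0+b/n)/\Ima(b/n)$; since $x_0$ is a strong Lebesgue point, $\Ima F_{\mu_{\sing}}(x_0+i0)=0$ (by the Poisson representation, using that $\mu_{\sing}$ has vanishing density at $x_0$), so this quantity is $o(n)$, and combined with the $O(n)$ reproducing-formula bound on $\int|K_n|^2\,d\mu_{\sing}\le\int|K_n|^2\,d\mu$, the singular contribution is $o(1)$ as needed.

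Identity \eqref{mixed2} follows from the parallel argument with the roles of $a$ and $b$ interchanged in their structural positions (the ``complex argument'' of $K_n$ versus the ``denominator parameter''), noting that the corresponding contour integral on the right-hand side is evaluated in the opposite half-plane. For the double integral \eqref{notmixed}, I would apply the two-variable change of variables $t=x_0+u/n$, $s=x_0+v/n$, using the uniform universality of $K_n(x_0+u/n, x_0+v/n)/n$ on compact $(u,v)$-sets, strong Lebesgue for both measure factors, and iterated Cauchy--Schwarz for both singular parts to arrive at the claimed double-integral limit.
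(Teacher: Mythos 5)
Your proposal is correct in outline and shares the paper's basic skeleton --- split the integrals at scale $O(1/n)$ around $x_0$, rescale and use the uniform convergence \eqref{universality2} together with the Lebesgue-point property \eqref{strong-Lebesgue1} on the inner window, and control the tails and the singular part by Cauchy--Schwarz against quadratic integrals of the measure --- but several technical choices genuinely differ. The paper proves \eqref{notmixed} in detail (delegating \eqref{mixed1}--\eqref{mixed2} to a modification of \cite[Theorem 3.1]{bs}); it expands $K_n(t,s)=\sum_j p_j(t)p_j(s)$, applies Cauchy--Schwarz in the summation index $j$, bounds each resulting factor by $\int_I d\mu(t)/|x_0+\tfrac{z}{n}-t|^2$ via completeness of $\{p_j\}$, and then proves the three needed estimates ($O(n)$ on $I_n$, $o(n)$ for $\mu_\sing$, $O(n/M)$ off $I_n$) by integration by parts against the distribution function, with an auxiliary scale $M/n^{1/3}$ for the tail. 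You instead obtain the $L^2$ factor from the reproducing identity $\int |K_n(y,s)|^2\,d\mu(s)=K_n(y,\bar y)=O(n)$ (universality at the conjugate pair $(a,\bar a)$), and derive the measure estimates directly from the Poisson representation, $\int d\mu(t)/|z-t|^2=\Ima F_\mu(z)/\Ima z$, together with finiteness of $F_\mu(x_0+i0)$ and the vanishing density of $\mu_\sing$; this is arguably cleaner and avoids the $n^{1/3}$ device, while the paper's Bessel-type bound has the advantage of controlling both variables restricted to arbitrary sets $I,J$, which is exactly what the double integral \eqref{notmixed} requires. Two small points to make explicit when writing this up: first, $x_0+\tfrac{b}{n}$ approaches $x_0$ non-tangentially rather than vertically, so to get $\Ima F_{\mu_\sing}(x_0+\tfrac{b}{n})\to 0$ and $F_\mu(x_0+\tfrac{a}{n})\to F_\mu(x_0+i0)$ you should invoke either the uniform comparability of the Poisson kernel inside a Stolz angle with the vertical one, or write $\Ima F_{\mu_\sing}=\Ima F_\mu-\Ima F_{\mu_\ac}$ and use Fatou's theorem at the Lebesgue point of $w$ (the paper records the needed non-tangential statement for $F_\mu$ in Section 2), so this is a gloss to fill rather than a gap; second, in \eqref{notmixed} your ``iterated Cauchy--Schwarz'' must be implemented as above (Cauchy--Schwarz in $j$ after expanding $K_n$), since a naive Cauchy--Schwarz in $(t,s)$ against $\int\!\!\int K_n(t,s)^2\,d\mu(t)\,d\mu(s)=n$ is too lossy to handle the regions where one variable sits in the singular or tail set.
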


\begin{proof}
We first prove \eqref{notmixed}. For simplicity of notation we assume $x_0=0$.

Fix $M>0$, and let $I_n=[-\frac{M}{n}, \frac{M}{n}]$. We split the integral as follows
\beq \no
\begin{split}
&\int \int \frac{K_n(t,s)d\mu(t)d\mu(s)}{\left(\frac{a}{n}-t \right) \left(\frac{b}{n}-s \right)} =\int_{I_n} \int_{I_n} \frac{K_n(t,s)d\mu_\ac(t)d\mu_\ac(s)}{\left(\frac{a}{n}-t \right) \left(\frac{b}{n}-s \right)} \\
&\quad+\int_{I_n} \int_{I_n} \frac{K_n(t,s)d\mu_\ac(t)d\mu_\sing(s)}{\left(\frac{a}{n}-t \right) \left(\frac{b}{n}-s \right)}
+\int_{I_n} \int_{I_n} \frac{K_n(t,s)d\mu_\sing(t)d\mu_\ac(s)}{\left(\frac{a}{n}-t \right) \left(\frac{b}{n}-s \right)}\\
&\quad+\int_{I_n} \int_{I_n} \frac{K_n(t,s)d\mu_\sing(t) d\mu_\sing(s)}{\left(\frac{a}{n}-t \right) \left(\frac{b}{n}-s \right)} + \int_{\bbR \setminus I_n} \int_{I_n} \frac{K_n(t,s)d\mu(t) d\mu(s)}{\left(\frac{a}{n}-t \right) \left(\frac{b}{n}-s \right)}\\
&\quad + \int_{I_n} \int_{\bbR \setminus I_n} \frac{K_n(t,s)d\mu(t) d\mu(s)}{\left(\frac{a}{n}-t \right) \left(\frac{b}{n}-s \right)}+\int_{\bbR \setminus I_n} \int_{\bbR \setminus I_n} \frac{K_n(t,s)d\mu(t) d\mu(s)}{\left(\frac{a}{n}-t \right) \left(\frac{b}{n}-s \right)}.
\end{split}
\eeq

We evaluate the first term by changing variables
\beq \no
\begin{split}
&\int_{I_n} \int_{I_n} \frac{K_n(t,s)}{\left(\frac{a}{n}-t \right) \left(\frac{b}{n}-s \right)}d\mu_\ac(t) d\mu_\ac(s) \\
&\quad=\int_{I_n} \int_{I_n} \frac{K_n(t,s)}{\left(\frac{a}{n}-t \right) \left(\frac{b}{n}-s \right)}w(t) w(s) dtds \\
&\quad = \int_{-M}^M \int_{-M}^M \frac{K_n(t/n,s/n)}{\left(a-t \right) \left(b-s \right)}w(t/n) w(s/n) dtds.
\end{split}
\eeq
By the fact that $\frac{K_n\left(\frac{a}{n},\frac{b}{n} \right)}{n} \rightarrow \frac{\sin\left(\pi \rho(0)(b-a) \right)}{\pi w(0)(b-a)}$ as $n \rightarrow \infty$, uniformly on compacts, together with the fact that $0$ is a Lebesgue point of $w$, we see that 
\beq \no
\begin{split}
& \lim_{n \rightarrow \infty}\frac{1}{n}\left(\int_{I_n} \int_{I_n} \frac{K_n(t,s)}{\left(\frac{a}{n}-t \right) \left(\frac{b}{n}-s \right)}d\mu_\ac(t) d\mu_\ac(s) \right) \\
&\quad=\int_{-M}^M \int_{-M}^M \frac{\sin{\pi \rho(0)\left(t-s \right)}}{\pi (t-s)(t-a)(s-b)}w(0)dt ds
\end{split}
\eeq
which converges to the desirable limit as  $M \rightarrow \infty$. Suppose we show that
\beq \label{remainder1}
\limsup_{n \rightarrow \infty} \frac{1}{n} \int_{\bbR \setminus I_n} \int_{I_n} \frac{K_n(t,s)}{\left(\frac{a}{n}-t \right) \left(\frac{b}{n}-s \right)}d\mu(t) d\mu(s)=O(M^{-1/2})
\eeq
\beq \label{remainder2}
\limsup_{n \rightarrow \infty} \frac{1}{n} \int_{I_n} \int_{\bbR \setminus I_n} \frac{K_n(t,s)}{\left(\frac{a}{n}-t \right) \left(\frac{b}{n}-s \right)}d\mu(t) d\mu(s)=O(M^{-1/2}),
\eeq
\beq \label{remainder3}
\limsup_{n \rightarrow \infty} \frac{1}{n} \int_{\bbR \setminus I_n} \int_{\bbR \setminus I_n} \frac{K_n(t,s)}{\left(\frac{a}{n}-t \right) \left(\frac{b}{n}-s \right)}d\mu(t) d\mu(s)=O(M^{-1/2}),
\eeq
and
\beq \label{remainder4}
\limsup_{n \rightarrow \infty} \frac{1}{n} \int_{ I_n} \int_{ I_n} \frac{K_n(t,s)}{\left(\frac{a}{n}-t \right) \left(\frac{b}{n}-s \right)}d\mu_\sing(t) d\mu(s)=0.
\eeq
Then, by taking first $n \rightarrow \infty$ and then $M \rightarrow \infty$, we are done.

For any sets $I, J \subseteq \bbR$, write
\beq \no
\begin{split}
& \left| \int_{I} \int_{J} \frac{K_n(t,s)}{\left(\frac{a}{n}-t \right) \left(\frac{b}{n}-s \right)}d\mu(t)d\mu(s) \right| \\
&\quad = \left|\sum_{j=0}^{n-1} \int_{I} \frac{p_j(t)}{\frac{a}{n}-t} d\mu(t) \int_{J} \frac{p_j(s)}{ \frac{b}{n}-s} d\mu(s) \right| \\
&\quad \leq \left(\sum_{j=0}^{n-1}\left| \int_{I} \frac{p_j(t)}{\frac{a}{n}-t} d\mu(t)\right|^2 \right)^{1/2} \left( \sum_{j=0}^{n-1}\left| \int_{J} \frac{p_j(s)}{ \frac{b}{n}-s} d\mu(s)\right|^2 \right)^{1/2} \\
&\quad \leq \left(\sum_{j=0}^{\infty}\left| \int_{I} \frac{p_j(t)}{\frac{a}{n}-t} d\mu(t)\right|^2 \right)^{1/2} \left( \sum_{j=0}^{\infty}\left| \int_{J} \frac{p_j(s)}{ \frac{b}{n}-s} d\mu(s)\right|^2 \right)^{1/2},
\end{split}
\eeq
by Cauchy-Schwarz, and note that
\beq \no 
\begin{split}
\sum_{j=0}^{\infty}\left| \int_{I} \frac{p_j(t)d\mu(t)}{\frac{z}{n}-t}\right|^2 = \sum_{j=0}^\infty \int_{I} \frac{p_j(t) d\mu(t)}{\frac{z}{n}-t}  \int_{I} \frac{p_j(s)d\mu(s)}{\frac{\overline{z}}{n}-s} = \int_{I}\frac{d\mu(t)}{\left|\frac{z}{n}-t \right|^2},
\end{split}
\eeq
for any $z$ with $\Ima(z) \neq 0$, by the completeness of $\{p_j (\cdot)\}_{j=0}^\infty$ in $L^2(d\mu)$. 

It follows that
\beq \no
 \left| \int_{\bbR \setminus I_n} \int_{ I_n} \frac{K_n(t,s) d\mu(t) d\mu(s)}{\left(\frac{a}{n}-t \right) \left(\frac{b}{n}-s \right)} \right| 
 \leq \left(\int_{\bbR \setminus I_n}\frac{d\mu(t)}{\left|\frac{a}{n}-t \right|^2} \right)^{1/2} \left( \int_{ I_n} \frac{d\mu(s)}{\left| \frac{b}{n} -s\right|^2} \right)^{1/2},
\eeq
\beq \no
\left| \int_{I_n} \int_{\bbR \setminus I_n} \frac{K_n(t,s) d\mu(t) d\mu(s)}{\left(\frac{a}{n}-t \right) \left(\frac{b}{n}-s \right)} \right|  \leq \left(\int_{ I_n}\frac{d\mu(t)}{\left|\frac{a}{n}-t \right|^2} \right)^{1/2} \left( \int_{\bbR \setminus I_n} \frac{d\mu(s)}{\left| \frac{b}{n} -s\right|^2} \right)^{1/2},
\eeq
\beq \no
\begin{split}
& \left| \int_{\bbR \setminus I_n} \int_{\bbR \setminus I_n} \frac{K_n(t,s)d\mu(t) d\mu(s)}{\left(\frac{a}{n}-t \right) \left(\frac{b}{n}-s \right)} \right| \\
&\quad \leq \left(\int_{\bbR \setminus I_n}\frac{d\mu(t)}{\left|\frac{a}{n}-t \right|^2} \right)^{1/2} \left(  \int_{\bbR \setminus  I_n} \frac{d\mu(s)}{\left| \frac{b}{n} -s\right|^2} \right)^{1/2},
\end{split}
\eeq
and (by further restricting from $I_n$ to a supporting set of zero Lebesgue measure for $\mu_\sing$)
\beq \no
 \left|\int_{ I_n} \int_{ I_n} \frac{K_n(t,s) d\mu_\sing(t) d\mu(s)}{\left(\frac{a}{n}-t \right) \left(\frac{b}{n}-s \right)} \right| \leq  \left(\int_{I_n}\frac{d\mu_\sing(t)}{\left|\frac{a}{n}-t \right|^2} \right)^{1/2} \left( \int_{ I_n} \frac{d\mu(s)}{\left| \frac{b}{n} -s\right|^2} \right)^{1/2}.
\eeq

Thus, if we show that
\beq \label{first_estimate}
\int_{I_n} \frac{d\mu(t)}{\left|\frac{a}{n}-t \right|^2}=O( n ),
\eeq
\beq \label{second_estimate}
\int_{I_n} \frac{d\mu_\sing(t)}{\left|\frac{a}{n}-t \right|^2}=o(n),
\eeq
and 
\beq \label{third_estimate}
\int_{\bbR \setminus I_n} \frac{d\mu(t)}{\left|\frac{a}{n}-t \right|^2}=O\left(\frac{n}{M} \right)
\eeq
for any $a \in \bbC$ with $\Ima(a) \neq 0$, then \eqref{remainder1}--\eqref{remainder4} will follow. 

Note that $x_0$ being a strong Lebesgue point of $\mu$ implies that for sufficiently large $n$, $\mu\left( \left[-\frac{M}{n}, t \right] \right) \lesssim  t+\frac{M}{n}$ for any $-\frac{M}{n}< t \leq \frac{M}{n}$. Regarding $\mu_\sing$, for any $\varepsilon >0$, for $n$ large enough (depending on $\varepsilon$), $\mu_\sing\left( \left[-\frac{M}{n}, t \right] \right) \leq \varepsilon \left( t+\frac{M}{n} \right)$ for any $-\frac{M}{n} < t \leq \frac{M}{n}$. Thus,  integration by parts gives
\beq \no
\begin{split}
\int_{I_n} \frac{d\mu(t)}{\left|\frac{a}{n}-t \right|^2}&\lesssim n^2 \mu(I_n)\frac{1}{|a-M|^2}+\left| \int_{I_n}\frac{\mu\left( \left[-\frac{M}{n}, t \right] \right)}{\left|\frac{a}{n}-t \right|^3}dt \right| \\
& \lesssim \frac{nM}{|a-M|^2}+\int_{I_n}\frac{\left|t +\frac{M}{n} \right|}{\left|t-\frac{a}{n} \right|^3}dt \\
& \leq \frac{nM}{|a-M|^2}+n \int_{-M}^M\frac{\left|t +M \right|}{\left|t-a \right|^3}dt \\
&=\frac{nM}{|a-M|^2}+n \int_{0}^{2M}\frac{t }{\left|t-M-a \right|^3}dt  \\
&\lesssim \frac{nM}{|a-M|^2}+n \left(\frac{2M}{|M-a|^2}\right)+n \int_{0}^{2M}\frac{1 }{\left|t-M-a \right|^2}dt\\
&\leq \frac{nM}{|a-M|^2}+n \left(\frac{2M}{|M-a|^2}\right)+n \int_{-\infty}^{\infty}\frac{1 }{\left|t-a \right|^2}dt  =O(n)
\end{split}
\eeq
where the implicit constant is \emph{independent of $M$}. This is \eqref{first_estimate}. 

In the same way,
\beq \no
\begin{split}
\int_{I_n} \frac{d\mu_\sing(t)}{\left|\frac{a}{n}-t \right|^2}&\lesssim n^2 \mu_\sing(I_n)\frac{1}{|a-M|^2}+\left| \int_{I_n}\frac{\mu_\sing\left( \left[-\frac{M}{n}, t \right] \right)}{\left|\frac{a}{n}-t \right|^3}dt \right| \\
& \lesssim \varepsilon \frac{nM}{|a-M|^2}+\varepsilon\int_{I_n}\frac{\left|t +\frac{M}{n} \right|}{\left|t-\frac{a}{n} \right|^3}dt \\
& \leq \varepsilon \frac{nM}{|a-M|^2}+\varepsilon n \int_{-M}^M\frac{t +M }{\left|t-a \right|^3}dt \\
&=  \varepsilon \frac{nM}{|a-M|^2}+\varepsilon n \int_{0}^{2M}\frac{t }{\left|t-M-a \right|^3}dt = O(n)\varepsilon
\end{split}
\eeq
which means that 
\beq \no
\limsup_{n \rightarrow \infty} \frac{1}{n} \int_{I_n} \frac{d\mu_\sing(t)}{\left|\frac{a}{n}-t \right|^2} \lesssim \varepsilon
\eeq
for $\varepsilon$ arbitrarily small. This is \eqref{second_estimate}.

As for \eqref{third_estimate}, following \cite{bs} we define $H_n=[-\frac{M}{n^{1/3}},\frac{M}{n^{1/3}}]$ and split the integral over $H_n$. For $M \geq 2|a|$  
\beq \no
\int_{\bbR \setminus I_n} \frac{d\mu(t)}{\left|\frac{a}{n}-t \right|^2} \leq 4 \int_{\bbR \setminus I_n} \frac{d\mu(t)}{t^2}=4 \int_{\bbR \setminus H_n} \frac{d\mu(t)}{t^2}+4 \int_{H_n \setminus I_n} \frac{d\mu(t)}{t^2}.
\eeq
Since $\mu$ is a probability measure and $t \notin H_n$ satisfies $ t^2 \geq \frac{M^2}{n^{2/3}}$ we see that
\beq \no
 \int_{\bbR \setminus H_n} \frac{d\mu(t)}{t^2} \leq \frac{n^{2/3}}{M^2}.
\eeq
For the remaining integral we use (again) integration by parts:
\beq \no
\begin{split}
 \int_{H_n \setminus I_n} \frac{d\mu(t)}{t^2} & \lesssim  \frac{n^{2/3}}{M^2}\mu\left(H_n \setminus I_n \right)+\int_{M/n}^{M/n^{1/3}}\frac{\mu \left(\left[\frac{M}{n},t \right] \right)+\mu \left(\left[-t,-\frac{M}{n}\right] \right)}{t^3}dt\\
& \lesssim \frac{M}{n^{1/3}} \frac{n^{2/3}}{M^2}+\int_{M/n}^{M/n^{1/3}}\frac{t-\frac{M}{n}}{t^3}dt \lesssim \frac{n}{M}+n\int_{M}^{n^{2/3}M}\frac{t-M}{t^3}dt \\
&\lesssim \frac{n}{M}+n\int_{M}^{\infty}\frac{t-M}{t^3}dt=O\left( \frac{n}{M} \right). 
\end{split}
\eeq
The last two inequalities imply \eqref{third_estimate} and thus finish the proof of \eqref{notmixed}.

The proof of \eqref{mixed1} and \eqref{mixed2} follows the same strategy of the proof of \cite[Theorem 3.1]{bs} with the following modifications: first, $K_n(x,x)w(x)$ of that proof (denoted in \cite{bs} by $\wti{K}_n(x,x)$) needs to be replaced by $n$ and the appropriate modifications made to the limit. For this purpose note that, by \eqref{universality2}, $\lim\frac{K_n(x_0,x_0)w(x_0)}{n}=\rho(x_0)$. Second, the condition of the measure $\mu$ being purely absolutely continuous in a neighborhood of $0$ may be relaxed to the conditions satisfied by $\mu$ here. The appropriate changes to the proof proceed by using integration by parts arguments in much the same way as we did above. 
\end{proof}

\begin{proof}[Proof of Theorem \ref{thm:secondkind}]
We first prove \eqref{universality-perturbed00} for $a ,b$ satisfying $\Ima(a)>0$, $\Ima(b)<0$. Write
\beq \no
\begin{split}
& \frac{\wti{K}_n\left(x_0+\frac{a}{n},x_0+\frac{b}{n} \right)}{n}=\frac{1}{n}\sum_{j=0}^{n-1}q_j\left(x_0+\frac{a}{n} \right)q_{j} \left(x_0+\frac{b}{n}\right)\\
&\quad =\frac{1}{n}\sum_{j=0}^{n-1}\left( \int\frac{p_j\left(x_0+\frac{a}{n} \right)-p_j(t)}{x_0+\frac{a}{n}-t}d\mu(t) \int\frac{p_{j}\left(x_0+\frac{b}{n} \right)-p_{j}(s)}{x_0+\frac{b}{n}-s}d\mu(s) \right)\\
&\quad=A_n+B_n+C_n
\end{split}
\eeq
where $A_n, B_n, C_n$ are obtained by carrying out the multiplication and collecting the terms, so
\beq \no
\begin{split}
&A_n=\frac{1}{n}\sum_{j=0}^{n-1}\left( p_j\left(x_0+\frac{a}{n} \right)p_{j}\left(x_0+\frac{b}{n} \right)\left(\int\frac{d\mu(t)}{x_0+\frac{a}{n}-t} \int\frac{d\mu(s)}{x_0+\frac{b}{n}-s} \right) \right)\\
&\quad=\frac{K_n\left(x_0+\frac{a}{n},x_0+\frac{b}{n} \right)}{n}\left(\int\frac{d\mu(t)}{x_0+\frac{a}{n}-t} \int\frac{d\mu(s)}{x_0+\frac{b}{n}-s} \right),
\end{split}
\eeq
\beq \no
B_n=\frac{1}{n}\left(\int \int \frac{K_n(t,s)}{\left(x_0+\frac{a}{n}-t \right) \left(x_0+\frac{b}{n}-s \right)}d\mu(t) d\mu(s) \right),
\eeq
and
\beq \no
C_n=\frac{-1}{n}\left(\int \int \frac{K_n\left(x_0+\frac{a}{n},s \right)+K_n\left(x_0+\frac{b}{n},t \right)}{\left(x_0+\frac{a}{n}-t \right) \left(x_0+\frac{b}{n}-s \right)}d\mu(t) d\mu(s) \right).
\eeq

By \eqref{universality2}, the fact that $x_0$ is a strong Lebesgue point, and the fact that $\Ima(a) \Ima(b)<0$ we get
\beq \no
\lim_{n \rightarrow \infty} A_n=\frac{\sin\left(\pi \rho(x_0)(b-a) \right)}{\pi w(x_0)(b-a)}\left|F(x+i0) \right|^2=\frac{\sin\left(\pi \rho(x_0)(b-a) \right)}{\pi \wti{w}(x_0)(b-a)}.
\eeq
Where we used \eqref{eq:secondkindweight} to write $\wti{w}(x)=\frac{w(x_0)}{\left|F(x_0+i0) \right|^2}$.

By \eqref{notmixed}, 
\beq \no
\begin{split}
&\lim_{n \rightarrow \infty}B_n=\int_{-\infty}^\infty \int_{-\infty}^\infty \frac{\sin \left(\pi \rho(x_0)\left(t-s \right) \right)}{\pi (t-s)(t-a)(s-b)}w(x_0)dt ds \\
&\quad=\int_{-\infty}^\infty \int_{-\infty}^\infty \frac{\sin \left(\pi \rho(x_0)\left(t-s \right) \right)}{\pi (t-s)(t-a)(s-b)}dt ds \frac{\Ima \left(F(x_0+i0) \right)}{\pi} \\
&\quad=\int_{-\infty}^\infty \int_{-\infty}^\infty \frac{\sin \left(\pi \rho(x_0)\left(t-s \right) \right)}{\pi^2 (t-s)(t-a)(s-b)}dt ds \left(\frac{F(x_0+i0)-\overline{F(x_0+i0)}}{2i}\right),
\end{split}
\eeq
and by \eqref{mixed1} and \eqref{mixed2}, together with 
\beq \no
\lim_{n \rightarrow \infty}\int\frac{d\mu(t)}{x_0+\frac{b}{n}-t}=F(x_0+i0)=\overline{\lim_{n \rightarrow \infty}\int\frac{d\mu(t)}{x_0+\frac{a}{n}-t}},
\eeq
we see that
\beq \no 
\begin{split}
&\lim_{n \rightarrow \infty}C_n \\
&\quad=\int_{-\infty}^\infty \frac{\sin \left(\pi \rho(x_0) (s-a) \right)\overline{F(x_0+i0)}+\sin \left(\pi\rho(x_0)(s-b) \right)F(x_0+i0)}{\pi (s-a)(s-b)}ds.
\end{split}
\eeq

Combining the limiting expressions for $A_n$, $B_n$ and $C_n$, we see that
\beq \no 
\begin{split}
& \frac{\wti{K}_n\left(x_0+\frac{a}{n},x_0+\frac{b}{n} \right)}{n}-\frac{\sin\left(\pi \rho(x_0)(b-a) \right)}{\pi \wti{w}(x_0)(b-a)} \\
&\quad =\int_{-\infty}^\infty \frac{\sin \left(\pi \rho(x_0) (s-a) \right)\overline{F(x_0+i0)}+\sin \left(\pi\rho(x_0)(s-b) \right)F(x_0+i0)}{\pi (s-a)(s-b)}ds \\
&\qquad+\int_{-\infty}^\infty \int_{-\infty}^\infty \frac{\sin \left(\pi \rho(x_0)\left(t-s \right) \right)}{\pi^2 (t-s)(t-a)(s-b)}dt ds \left(\frac{F(x_0+i0)-\overline{F(x_0+i0)}}{2i}\right)\\
&\qquad+o(1).
\end{split}
\eeq
This step of the proof will therefore be complete if we show that
\beq \no
\begin{split}
\int_{-\infty}^\infty \frac{\sin \left(\pi \rho(x_0) (s-a) \right)}{\pi (s-a)(s-b)}ds&=-\int_{-\infty}^\infty \frac{\sin \left(\pi\rho(x_0)(s-b) \right)}{\pi (s-a)(s-b)}ds \\
&=\int_{-\infty}^\infty \int_{-\infty}^\infty \frac{\sin \left(\pi \rho(x_0)\left(t-s \right) \right)}{2\pi^2 i (t-s)(t-a)(s-b)}dt ds.
\end{split}
\eeq

We first write
\beq \no
\begin{split}
&\int_{-\infty}^\infty \int_{-\infty}^\infty \frac{\sin \left(\pi \rho(x_0)\left(t-s \right) \right)}{2\pi^2 i (t-s)(t-a)(s-b)}dt ds \\
&\quad =\int_{-\infty}^\infty \frac{ds}{2\pi i (s-b)}\int_{-\infty}^\infty \frac{\sin \left(\pi \rho(x_0)\left(t-s \right) \right)}{\pi (t-s)(t-a)}dt. 
\end{split}
\eeq
Now, the inner integral can be evaluated using contour integration, by first deforming $\bbR$ into a path, $\Gamma$, which differs from $\bbR$ only by bypassing $s$ along a small semicircle through the lower half-plane around $s$. We then split the integrand as follows:
\beq \no
\begin{split}
&\int_{-\infty}^\infty \frac{\sin \left(\pi \rho(x_0)\left(t-s \right) \right)}{\pi (t-s)(t-a)}dt =
\int_{\Gamma} \frac{\sin \left(\pi \rho(x_0)\left(t-s \right) \right)}{\pi (t-s)(t-a)}dt \\
&\quad=
\int_{\Gamma} \frac{e^{i \left(\pi \rho(x_0)\left(t-s \right) \right)}-e^{-{i \left(\pi \rho(x_0)\left(t-s \right) \right)}}}{2\pi i (t-s)(t-a)}dt\\
&\quad=\frac{1}{2\pi i}\int_{\Gamma} \frac{e^{i \left(\pi \rho(x_0)\left(t-s \right) \right)}}{(t-s)(t-a)}dt-\frac{1}{2 \pi i}\int_{\Gamma} \frac{e^{-{i \left(\pi \rho(x_0)\left(t-s \right) \right)}}}{ (t-s)(t-a)}dt.
\end{split}
\eeq
The first integral is evaluated by contour integration through the upper half-plane to show:
\beq \no
\frac{1}{2\pi i}\int_{\Gamma} \frac{e^{i \left(\pi \rho(x_0)\left(t-s \right) \right)}}{(t-s)(t-a)}dt= \frac{e^{i \pi \rho(x_0) \left(a-s \right)}}{(a-s)}+\frac{1}{\left(s-a \right)}=\frac{e^{i \pi \rho(x_0) \left(a-s \right)}-1}{\left(a-s \right)},
\eeq
and the second integral is evaluated through the lower half-plane to show:
\beq \no
\frac{1}{2 \pi i}\int_{\Gamma} \frac{e^{-{i \left(\pi \rho(x_0)\left(t-s \right) \right)}}}{ (t-s)(t-a)}dt=0.
\eeq
Thus we see that
\beq \no
\int_{-\infty}^\infty \int_{-\infty}^\infty \frac{\sin \left(\pi \rho(x_0)\left(t-s \right) \right)}{2\pi^2 i (t-s)(t-a)(s-b)}dt ds 
=\int_{-\infty}^\infty \frac{e^{i \pi \rho(x_0) \left(a-s \right)}-1}{2\pi i (s-b)\left(a-s \right)}ds.
\eeq
Now note that
\beq \no
\begin{split}
&\int_{-\infty}^\infty \frac{e^{i \pi \rho(x_0) \left(a-s \right)}-1}{2\pi i (s-b)\left(a-s \right)}ds-
\int_{-\infty}^\infty \frac{\sin \left(\pi \rho(x_0) (s-a) \right)}{\pi (s-a)(s-b)}ds \\
&\quad =\int_{-\infty}^\infty \frac{e^{i \pi \rho(x_0) \left(a-s \right)}-1}{2\pi i (s-b)\left(a-s \right)}ds-
\int_{-\infty}^\infty \frac{\sin \left(\pi \rho(x_0) (a-s) \right)}{\pi (a-s)(s-b)}ds \\
&\quad =\int_{-\infty}^\infty \frac{e^{i \pi \rho(x_0) \left(a-s \right)}-1}{2\pi i (s-b)\left(a-s \right)}ds-
\int_{-\infty}^\infty \frac{e^{i \left(\pi \rho(x_0) (a-s) \right)}-e^{-i \left(\pi \rho(x_0) (a-s) \right)}}{2 \pi i (a-s)(s-b)}ds\\
&\quad =
\int_{-\infty}^\infty \frac{e^{-i \left(\pi \rho(x_0) (a-s) \right)}-1}{2 \pi i (a-s)(s-b)}ds=0
\end{split}
\eeq
by contour integration through the upper half-plane! (Note that $a$ is \emph{not} a pole of the integrand and the integrand decays like $|s|^{-2}$ as $|s| \rightarrow \infty$ in the upper half plane).

By writing
\beq \no
\begin{split}
&\int_{-\infty}^\infty \int_{-\infty}^\infty \frac{\sin \left(\pi \rho(x_0)\left(t-s \right) \right)}{2\pi^2 i (t-s)(t-a)(s-b)}dt ds \\
&\quad =\int_{-\infty}^\infty \frac{dt}{2\pi i (t-a)}\int_{-\infty}^\infty \frac{\sin \left(\pi \rho(x_0)\left(s-t\right) \right)}{\pi (s-t)(s-b)}ds
\end{split}
\eeq 
and carrying out the analogous computation (essentially, interchanging the roles of ``upper half-plane'' and ``lower half-plane'' in the argument above) we see that also
\beq \no
\int_{-\infty}^\infty \frac{\sin \left(\pi\rho(x_0)(s-b) \right)}{\pi (s-a)(s-b)}ds =-\int_{-\infty}^\infty \int_{-\infty}^\infty \frac{\sin \left(\pi \rho(x_0)\left(t-s \right) \right)}{2\pi^2 i (t-s)(t-a)(s-b)}dt ds
\eeq
which finishes the first step of the proof. Since it will be important again later, we note we have shown that 
\beq \label{eq:important_equality}
\int_{-\infty}^\infty \frac{\sin \left(\pi \rho(x_0) (s-a) \right)}{\pi (s-a)(s-b)}ds=-\int_{-\infty}^\infty \frac{\sin \left(\pi\rho(x_0)(s-b) \right)}{\pi (s-a)(s-b)}ds. 
\eeq

Now, by taking $b=\overline{a}$ we see that
\beq \no
\frac{1}{n}\sum_{j=0}^{n-1}\left| q_j\left(x_0+\frac{a}{n} \right)\right|^2 
\eeq
is bounded uniformly on compact sets of $\bbC \setminus \bbR$. Since this is true also for 
\beq \no
\frac{1}{n}\sum_{j=0}^{n-1}\left| p_j\left(x_0+\frac{a}{n} \right)\right|^2,
\eeq
we deduce, using Cauchy-Schwarz and the boundedness of $a_n$, that
\beq \no
\frac{1}{n} \sum_{j=0}^{n-1} \left \| \Phi_j \left(x_0+\frac{a}{n}\right) \right \|^2
\eeq
is bounded uniformly on compact sets of $\bbC \setminus \bbR$.

It now follows from a modification of the proof of \cite[Theorem 3]{als} that in fact 
\beq \no
\sup_{n} \frac{1}{n} \sum_{j=0}^{n-1} \left \| \Phi_j \left(x_0+\frac{a}{n} \right) \right \|^2 < \infty
\eeq
is bounded uniformly for $a$ in compact subsets of $\bbC$. Explicitly, note that for any $a,b \in \bbC$, 
\beq \no
\left \| S_j\left(x_0+\frac{a}{n}\right)-S_j\left(x_0+\frac{b}{n} \right) \right \| \leq \alpha_-^{-1}\frac{|a-b|}{n},
\eeq
where $\alpha_-=\inf_n a_n >0$ (which follows from \cite{dombrowski} since $\mu$ has a non-trivial absolutely continuous component). 
Writing 
\beq \no
\begin{split}
\Phi_j\left(x_0+\frac{a}{n} \right)^{-1}\Phi_j\left(x_0+\frac{b}{n} \right) &= \left(1+B_j\right) \left(1+B_{j-1} \right) \ldots \left(1+B_1 \right),
\end{split}
\eeq
with
\beq \no
B_k=\Phi_k\left(x_0+\frac{a}{n} \right)^{-1} \left(S_k\left(x_0+\frac{b}{n} \right)-S_k \left(x_0+\frac{a}{n} \right) \right)\Phi_{k-1}\left(x_0+\frac{a}{n} \right),
\eeq
we get that 
\beq \no
\begin{split}
& \left \|\Phi_j \left(x_0 +\frac{b}{n} \right) \right \| \\
& \leq\left \|\Phi_j \left(x_0 +\frac{a}{n} \right) \right \| \textrm{exp} \left(\frac{\alpha_-^{-1} |a-b|}{n} \sum_{k=1}^j \left \|\Phi_j \left(x_0 +\frac{a}{n} \right) \right\| \left\|\Phi_{j-1} \left(x_0 +\frac{a}{n}\right) \right\| \right) \\
& \leq\left \|\Phi_j \left(x_0 +\frac{a}{n} \right) \right \| \textrm{exp} \left(\frac{\alpha_-^{-1} |a-b|}{n} \sum_{k=1}^j \left \|\Phi_j \left(x_0 +\frac{a}{n} \right) \right\|^2  \right) \\
& \leq\left \|\Phi_j \left(x_0 +\frac{a}{n} \right) \right \| \textrm{exp} \left(\frac{\alpha_-^{-1} |a-b|}{n} \sum_{k=1}^{n-1} \left \|\Phi_j \left(x_0 +\frac{a}{n} \right) \right\|^2  \right). \\
\end{split}
\eeq
(Note that $\left \| B_k \right \| \leq \left \|\Phi_k \left(x_0 +\frac{b}{n} \right)  \right\| \left\|\Phi_{k-1} \left(x_0 +\frac{b}{n} \right)  \right \| \frac{\alpha_-^{-1}|a-b|}{n}$, by its definition and  \eqref{eq:norm-transfer}; also note that $\left\|1+B_k \right\|\leq \textrm{exp} \left(\|B_k\| \right)$).

Now, if 
\beq \no
\sup_n \frac{1}{n} \sum_{j=0}^{n-1} \left \| \Phi_j \left(x_0+\frac{a}{n}\right) \right \|^2=C,
\eeq
it follows that
\beq \no
\frac{1}{n} \sum_{j=0}^{n-1} \left \| \Phi_j \left(x_0+\frac{b}{n}\right) \right \|^2 \leq \frac{1}{n} \sum_{j=0}^{n-1} \left \| \Phi_j \left(x_0+\frac{a}{n}\right) \right \|^2 \textrm{exp} \left(2C \alpha_-^{-1} |a-b| \right)
\eeq
and we see that 
\beq \no
\sup_{n} \frac{1}{n} \sum_{j=0}^{n-1} \left \| \Phi_j \left(x_0+\frac{b}{n} \right) \right \|^2 < \infty
\eeq
is bounded uniformly for $b$ in compact sets of $\bbC$.

Now, by Cauchy-Schwarz,
\beq \no
\begin{split}
& \left|\frac{1}{n}\sum_{j=0}^{n-1} q_j\left(x_0+\frac{a}{n} \right)q_j\left(x_0 +\frac{b}{n}\right) \right|  \\
&\quad \leq \left( \frac{1}{n}\sum_{j=0}^{n-1}\left| q_j\left(x_0+\frac{a}{n} \right)\right|^2 \right)^{1/2} \left( \frac{1}{n}\sum_{j=0}^{n-1}\left| q_j\left(x_0+\frac{b}{n} \right)\right|^2  \right)^{1/2}.
\end{split}
\eeq
This implies that for any fixed $a \in \bbC$, the family
\beq \no
\widetilde{g}_n(b)=\frac{1}{n}\sum_{j=0}^{n-1} q_j\left(x_0+\frac{a}{n} \right)q_j\left(x_0 +\frac{b}{n}\right) 
\eeq
is a normal family. Fixing $a \in \bbC \setminus \bbR$, we note that the limit on a set with a limit point determines the limit and so we get that for any fixed  $a \in \bbC \setminus \bbR$ and any $b \in \bbC$
\beq \no
\lim_{n \rightarrow \infty}\frac{\wti{K}_n\left(x_0+\frac{a}{n},x_0+\frac{b}{n} \right)}{n}=\frac{\sin\left(\pi \rho(x_0)(b-a) \right)}{\pi \wti{w}(x_0)(b-a)}.
\eeq
Fixing now $b \in \bbC$ we see that the family
\beq \no
\widetilde{h}_n(a)=\frac{1}{n}\sum_{j=0}^{n-1} q_j\left(x_0+\frac{a}{n} \right)q_j\left(x_0 +\frac{b}{n}\right) 
\eeq
is a normal family, which implies finally that 
\beq \no
\lim_{n \rightarrow \infty}\frac{\wti{K}_n\left(x_0+\frac{a}{n},x_0+\frac{b}{n} \right)}{n}=\frac{\sin\left(\pi \rho(x_0)(b-a) \right)}{\pi \wti{w}(x_0)(b-a)}
\eeq
for any $a, b \in \bbC$, uniformly in compact sets.
\end{proof}

\begin{proof}[Proof of Theorem \ref{thm:rankone}]
By the same arguments as in the proof of Theorem \ref{thm:secondkind}, it is enough to prove \eqref{universality-perturbed} for $a, b$ such that $\Ima(a)>0$, $\Ima(b)<0$. (Note that the second kind polynomials for $\mu^{(\beta_1)}$ are still  $\{q_n\}_{n=0}^\infty$).

Using \eqref{eq:linearcomb}, it is easy to see that 
\beq \label{eq:kernel-linear}
\begin{split}
K_n^{(\beta_1)}\left(x,y \right)& =K_n\left(x,y \right)+\beta_1^2 \wti{K}_n\left(x,y \right)  \\ &\quad-\beta_1\left(\sum_{j=0}^{n-1}q_j \left(x \right) p_j \left(y \right)+\sum_{j=0}^{n-1}q_j \left(y \right) p_j \left(x \right) \right) \\
&=\left(1-\beta_1 \int\frac{d\mu(t)}{x-t}-\beta_1 \int\frac{d\mu(t)}{y-t}  \right)K_n\left(x,y \right)+\beta_1^2 \wti{K}_n\left(x,y \right) \\
&\quad -\beta_1 \left(\int\frac{K_n(x,t)d\mu(t)}{y-t}+\int\frac{K_n(y,t)}{x-t} \right)
\end{split}
\eeq
where the last equality was obtained by substituting $q_j(x)=\int\frac{p_j(x)-p_j(t)}{x-t}d\mu(t)$ and collecting the terms.
Now, all we have to do is compute the appropriate limits for $x=x_0+\frac{a}{n}$ and $y=x_0+\frac{b}{n}$. By Lemma \ref{lemma-rankone},
\beq \no
\begin{split}
&\lim_{n \rightarrow \infty}\frac{1}{n}\left( \int\frac{K_n\left(x_0+\frac{a}{n},t \right)d\mu(t)}{x_0+\frac{b}{n}-t}+\int\frac{K_n\left(x_0+\frac{b}{n},t \right)}{x_0+\frac{a}{n}-t} \right) \\
&\quad=-\left( \int_{-\infty}^\infty\frac{\sin \pi \rho(x_0) (s-a)}{\pi(s-a)(s-b)}ds+\int_{-\infty}^\infty\frac{\sin \pi \rho(x_0) (s-b)}{\pi(s-a)(s-b)}ds \right)=0
\end{split}
\eeq
by \eqref{eq:important_equality}.

Since 
\beq \no
\begin{split}
&\lim_{n \rightarrow \infty} \left(1-\beta_1 \int\frac{d\mu(t)}{x_0+\frac{a}{n}-t}-\beta_1 \int\frac{d\mu(t)}{x_0+\frac{b}{n}-t}  \right)\\
&\quad =\lim_{n \rightarrow \infty} \left(1+\beta_1 \int\frac{d\mu(t)}{t-x_0-\frac{a}{n}}+\beta_1 \int\frac{d\mu(t)}{t-x_0-\frac{b}{n}}  \right)\\
&\quad =1+\beta_1 (F(x_0+i0)+\overline{F(x_0+i0)}) =1+2\beta_1\textrm{Re}(F(x_0+i0)) 
\end{split}
\eeq
we see that
\beq \no
\begin{split}
&\lim_{n \rightarrow \infty}\frac{1}{n}K_n^{(\beta_1)} \left(x_0+\frac{a}{n},x_0+\frac{b}{n} \right) \\
&=\lim_{n \rightarrow \infty} \left(1+2\beta_1 \textrm{Re}(F(x_0+i0))\right) \lim_{n \rightarrow \infty}\frac{1}{n} K_n \left(x_0+\frac{a}{n}, x_0+\frac{b}{n} \right) \\
&\quad +\beta_1^2\lim_{n\rightarrow \infty} \frac{1}{n} \wti{K}_n\left(x_0+\frac{a}{n},x_0+\frac{b}{n} \right) \\
&\quad=\left(1+2\beta_1 \textrm{Re}(F(x_0+i0))+\beta_1^2 \left|F(x_0+i0) \right|^2 \right) \frac{\sin \pi \rho(x_0) (b-a)}{\pi w(x_0) (b-a)}
\end{split}
\eeq
by Theorem \ref{thm:secondkind}. But by \eqref{eq:stieltjesrankoneac1} this is precisely
\beq \no
\frac{\sin \pi \rho(x_0) (b-a)}{\pi w^{(\beta_1)}(x_0) (b-a)}.
\eeq
We are done.
\end{proof}


\section{Proofs of Theorems \ref{StabilityOfSCCDK} and \ref{RandStabilityofSCCDK}}

Both the proof of Theorem \ref{StabilityOfSCCDK} and that of \ref{RandStabilityofSCCDK} are standard applications of variation of parameters methods. In the proofs we give, we explain the connection and then refer to relevant theorems from the literature.
 
\begin{proof}[Proof of Theorem \ref{StabilityOfSCCDK}]
Let $\mu^{(\beta)}$ be the spectral measure of the perturbed Jacobi matrix and write $p^{(\beta)}_k$ and $q^{(\beta)}_k$ for the first and second kind orthogonal polynomials, respectively, associated with the measure $\mu^{(\beta)}$. Theorem 3 and Corollary 1.3 of \cite{als} say that if 
\beq \label{eq:firstkindconvergence}
\lim_{n \rightarrow \infty}\frac{1}{n}\sum_{j=0}^{n-1}|p_j^{(\beta)}(x)|^2
\eeq
and
\beq \label{eq:secondkindconvergence}
\lim_{n \rightarrow \infty}\frac{1}{n}\sum_{j=0}^{n-1}|q_j^{(\beta)}(x)|^2
\eeq
both exist and are finite for a.e.\ $x \in A$ then quasi universality holds for a.e.\ $x \in A$. We shall show that this is indeed the case.

First note that Lebesgue a.e.\ point of $A$ is a strong Lebesgue point of $\mu$. Thus, it follows from the assumptions of the theorem and Theorem \ref{thm:secondkind} that 
\beq \no
\lim_{n \rightarrow \infty}\frac{1}{n}\sum_{j=0}^{n-1}|p_j(x)|^2
\eeq
and
\beq \no
\lim_{n \rightarrow \infty}\frac{1}{n}\sum_{j=0}^{n-1}|q_j(x)|^2
\eeq
both exist and are finite for a.e.\ $x \in A$. We claim also that 
\beq \no
\lim_{n \rightarrow \infty}\frac{1}{n} \sum_{j=0}^{n-1}p_j(x)q_j(x)
\eeq
exists and is finite for a.e.\ $x \in A$. To see this, consider
\beq \label{cute-formula1}
\frac{1}{n}\sum_{j=0}^{n-1}\left(p_j \left(x+\frac{a}{n} \right)q_j\left(x+\frac{b}{n}\right) +p_j \left(x+\frac{b}{n} \right)q_j\left(x+\frac{a}{n}\right)\right)
\eeq
for $a, b$ satisfying $\Ima(a)>0$, $\Ima(b)<0$. By the arguments in the proof of Theorem \ref{thm:rankone}, if $x$ is a strong Lebesgue point of $\mu$ and quasi universality holds at $x$, then this converges, as $n \rightarrow \infty$, to 
\beq \label{cute-formula2}
2\textrm{Re}(F(x+i0))\frac{\sin \pi \rho(x) (b-a)}{\pi w(x) (b-a)}.
\eeq
Using the same normal family argument as in the proof of Theorem \ref{thm:secondkind} we see that this convergence holds for every $a, b \in \bbC$. In particular, taking $a=b=0$, we get
\beq \no
\lim_{n \rightarrow \infty}\frac{1}{n}\sum_{j=0}^{n-1}p_j \left(x \right)q_j\left(x\right)= \textrm{Re}(F(x+i0))\frac{\rho(x)}{w(x) }
\eeq 
for every strong Lebesgue point of $\mu$ where quasi universality holds. In particular, the limit exists and is finite for Lebesgue a.e.\ $x \in A$.

To prove \eqref{eq:firstkindconvergence} and \eqref{eq:secondkindconvergence} we use variation of parameters. We write 
\beq \no
p^{(\beta)}_k(x)=u_{1,k}(x)p_k(x)+u_{2,k}(x)q_k(x),
\eeq
\beq \no
p^{(\beta)}_{k-1}(x)=u_{1,k}(x)p_{k-1}(x)+u_{2,k}(x)q_{k-1}(x),
\eeq
and
\beq \no
q^{(\beta)}_k(x)=v_{1,k}(x)p_k(x)+v_{2,k}q_k(x),
\eeq
\beq \no
q^{(\beta)}_{k-1}(x)=v_{1,k}(x)p_{k-1}(x)+v_{2,k}q_{k-1}(x).
\eeq
Suppose that we know that $u_k(x)=\left(\begin{array}{c} u_{1,k}(x)\\ u_{2,k}(x) \end{array} \right)$ converges to $u(x)=\left(\begin{array}{c} u_1(x) \\ u_2(x) \end{array} \right)$ as $k \rightarrow \infty$, then it is not hard to see that
\beq \no
\frac{1}{n} \sum_{j=0}^{n-1} u_{1,j}(x)^2 p_{j}(x)^2 \rightarrow u_1(x)^2 \lim_{n \rightarrow \infty}\frac{1}{n}\sum_{j=0}^{n-1} p_j(x)^2,
\eeq
and
\beq \no
\frac{1}{n} \sum_{j=0}^{n-1} u_{2,j}(x)^2 q_{j}(x)^2 \rightarrow u_2(x)^2 \lim_{n \rightarrow \infty}\frac{1}{n}\sum_{j=0}^{n-1} q_j(x)^2,
\eeq
as $n \rightarrow \infty$. With a little more work (using Cauchy-Schwartz), it follows that
\beq \no
\frac{1}{n} \sum_{j=0}^{n-1}u_{1,j}(x) u_{2,j}(x)p_j(x) q_{j}(x) \rightarrow u_1(x)u_2(x) \lim_{n \rightarrow \infty}\frac{1}{n}\sum_{j=0}^{n-1}p_j(x) q_j(x).
\eeq
But from writing
\beq \no
p_j^{(\beta)}(x)^2=u_{1,j}(x)^2 p_{j}(x)^2+ u_{2,j}(x)^2 q_{j}(x)^2+2u_{1,j}(x) u_{2,j}(x)p_j(x) q_{j}(x) 
\eeq
we see that the existence of these limits implies \eqref{eq:firstkindconvergence}. Similarly, convergence of $v_k(x)=\left(\begin{array}{c} v_{1,k}(x)\\ v_{2,k}(x) \end{array} \right)$ implies \eqref{eq:secondkindconvergence}. 

Thus, proving the convergence of $u_k(x)$ and $v_k(x)$ for a.e.\ $x \in A$ will prove the theorem. Note that since $\lim_{n \rightarrow \infty}\frac{1}{n}\sum_{j=0}^{n-1}\left( p_j(x)^2+q_j(x)^2\right)<\infty$ at Lebesgue a.e.\ $x \in A$, $A$ (up to a set of zero  Lebesgue measure) is a subset of the essential support of the a.c.\ part of both $\mu$ and $\wti{\mu}$, where $\wti{\mu}$ is the orthogonality measure of the $\{q_n\}_{n=0}^\infty$, (see, e.g., \cite{last-simon-inventiones}). Thus, the restrictions of both $\mu_{\textrm{ac}}$ and $\wti{\mu}_{\textrm{ac}}$ to $A$ are  equivalent to Lebesgue measure (and are also mutually equivalent).
From 
\beq \no
\int \sum_{k=1}^\infty|\beta_k| p^2_k(x) d\mu(x)=\sum_{k=1}^\infty|\beta_k| < \infty
\eeq and
\beq \no 
\int \sum_{k=1}^\infty |\beta_k| q^2_k(x) d\wti{\mu} (x)= \sum_{k=1}^\infty|\beta_k| <\infty,
\eeq
we see that for Lebesgue a.e.\ $x \in A$ 
\beq \no
\sum_{k=1}^\infty\left( |\beta_k|p_k(x)^2+|\beta_k|q_k(x)^2\right) <\infty
\eeq
which also implies that
\beq \no
\sum_{k=1}^\infty |\beta_k||p_k(x)q_k(x)| <\infty
\eeq
for Lebesgue a.e.\ $x \in A$. 
These are precisely the conditions of the discrete version of Theorem 2.2 in \cite{kls} with $f_+=f_-\equiv 1$ (see especially the remarks after the proof and equation (2.9) there). It follows that for a.e.\ $x \in A$, both $u_k(x)$ and $v_k(x)$ converge as $k \rightarrow \infty$ to a finite limit, which finishes the proof.
\end{proof}

\begin{proof}[Proof of Theorem \ref{RandStabilityofSCCDK}]
We use precisely the same strategy where now $u_k(x)$ and $v_k(x)$ are random vectors which we need to show have limits with probability one. Here, the condition \eqref{L2} precisely means that the conditions of Lemma 3.1 from \cite{bl} are satisfied for Lebesgue a.e.\ $x \in A$ (with $f_+\equiv 1$). It follows that for Lebesgue a.e.\ $x \in A$, $u_k(x)$ and $v_k(x)$ converge a.s.\ to a finite limit. An application of Fubini finishes the proof of the theorem. 
\end{proof}


\section{Proof of Theorem  \ref{no-point}}

\begin{proof}[Proof of Theorem \ref{no-point}]

Assume \eqref{universality} holds but \eqref{infinite-limit} does not. Then 
\beq \label{limit}
\lim_{n \rightarrow \infty} K_n(x,x)=\sum_{j=0}^\infty (p_j(x))^2 =C < \infty.
\eeq 
By \eqref{universality}, for any $a \in \bbR$
\beq \no
\lim_{n \rightarrow \infty} K_n\left(x+\frac{a}{n}, x+\frac{a}{n}\right)= \lim_{n \rightarrow \infty} \sum_{j=0}^{n-1} \left( p_j \left(x+\frac{a}{n} \right) \right)^2=C
\eeq
and also,
\beq \label{mixed-limit}
\lim_{n \rightarrow \infty} K_n \left(x+\frac{a}{n},x \right)=C\frac{\sin \left(\pi \rho(x)a \right)}{\pi \rho(x) a}.
\eeq
It follows that 
\beq \label{difference}
\begin{split}
& \lim_{n \rightarrow \infty} \sum_{j=0}^{n-1} \left(p_j \left(x+\frac{a}{n} \right)-p_j \left(x \right) \right)^2 \\
&= \lim_{n \rightarrow \infty} K_n(x,x)+ \lim_{n \rightarrow \infty} K_n \left(x+\frac{a}{n},x+ \frac{a}{n} \right)-\lim_{n \rightarrow \infty} 2K_n \left( x+\frac{a}{n},x \right) \\
&=2C \left(1- \frac{\sin \left(\pi \rho(x)a \right)}{\pi \rho(x) a}\right).
\end{split}
\eeq

We shall use the fact that $C < \infty$ to show that at the same time, for any $a \in \bbR$,
\beq \label{zero-difference}
\lim_{n \rightarrow \infty} \sum_{j=0}^{n-1} \left(p_j \left(x+\frac{a}{n} \right)-p_j \left(x \right) \right)^2=0,
\eeq
contradicting \eqref{difference} and thus proving the theorem.

Fix $a \in \bbR$ and let $\varepsilon>0$. Define $\varepsilon'=\frac{\varepsilon}{5+\sqrt{3}}$. Let $N_0$ be so large that for any $n \geq N_0$ 
\beq \no
\sum_{j=N_0}^n (p_j(x))^2 \leq \left|C-\sum_{j=0}^{N_0-1} (p_j(x))^2 \right| < \varepsilon',
\eeq
and in addition
\beq \no
\left| C-\sum_{j=0}^n \left( p_j \left(x+\frac{a}{n} \right) \right)^2 \right|< \varepsilon'.
\eeq
Now let $N_1$ be so large that for any $n \geq N_1$
\beq \no
\sum_{j=0}^{N_0-1}\left| \left( p_j(x) \right)^2-\left (p_j \left(x+\frac{a}{n} \right) \right)^2 \right|< \varepsilon',
\eeq
and also
\beq \no
\sum_{j=0}^{N_0-1} \left(p_j \left(x+\frac{a}{n} \right)-p_j \left(x \right) \right)^2 <\varepsilon'.
\eeq
(This can clearly be done since $\{p_j\}_{j=0}^{N_0-1}$ is a finite set of continuous functions). It follows that for any $n \geq \max(N_0,N_1)$
\beq \no
\begin{split}
\sum_{j=N_0}^n \left ( p_j \left(x+\frac{a}{n} \right) \right)^2 &=\sum_{j=0}^n \left ( p_j \left(x+\frac{a}{n} \right) \right)^2-C+C-\sum_{j=0}^{N_0-1} \left ( p_j \left(x+\frac{a}{n} \right) \right)^2 \\
& \leq \left | \sum_{j=0}^n \left( p_j \left(x+\frac{a}{n} \right) \right)^2-C \right|+\left|C-\sum_{j=0}^{N_0-1}\left( p_j(x) \right)^2 \right| \\
&\quad+\left|\sum_{j=0}^{N_0-1}\left( p_j(x) \right)^2-  \left ( p_j \left(x+\frac{a}{n} \right) \right)^2\right|<3\varepsilon'.
\end{split}
\eeq

Thus, for any $n \geq \max(N_0,N_1)$
\beq \no
\begin{split}
&\sum_{j=0}^{n}\left(p_j \left(x+\frac{a}{n} \right)-p_j \left(x \right) \right)^2  \leq \sum_{j=N_0}^n\left(p_j \left(x+\frac{a}{n} \right)-p_j \left(x \right) \right)^2+\varepsilon' \\
&=\sum_{j=N_0}^n \left( p_j(x) \right)^2+\sum_{j=N_0}^n \left( p_j \left(x+\frac{a}{n} \right) \right)^2-\sum_{j=N_0}^n p_j(x) p_j \left ( x+\frac{a}{n}\right)+\varepsilon' \\
&<\varepsilon'+3 \varepsilon'+\left(\sum_{j=N_0}^n p_j(x)^2 \right)^{1/2}\left(\sum_{j=N_0}^n \left(p_j \left(x+\frac{a}{n} \right) \right)^2 \right)^{1/2}+\varepsilon' \\
&<(5+\sqrt{3}) \varepsilon'=\varepsilon.
\end{split}
\eeq
We are done.
\end{proof}


\end{document}